\newtheorem{thm}{Theorem}[section]
\newtheorem{cor}[thm]{Corollary}
\newtheorem{lemma}[thm]{Lemma}
\newtheorem{prop}[thm]{Proposition}
\newtheorem{defn}[thm]{Definition}
\theoremstyle{definition}
\newtheorem{parag}[thm]{}
\newtheorem{rmk}[thm]{Remark}
\newtheorem{exa}[thm]{Example}
\theoremstyle{remark}
\numberwithin{equation}{thm}
\def\beq{\begin{equation}}
\def\eeq{\end{equation}}
\def\ben{\begin{enumerate}}
\def\een{\end{enumerate}}
\def\crash#1{}
\def\Pan{{\mathbf P}}
\def\Aan{{\mathbf A}}
\def\Q{{\mathbb Q}}
\def\R{{\mathbb R}}
\def\l{\left}
\def\r{\right}
\def\[[{\l[\l[}
\def\]]{\r]\r]}
\def\p{\prime}
\def\ord{{\rm ord}}
\def\an{{\rm an}\;}
\def\pr{{\rm pr}}
\def\cf{\emph{cf.}\;}
\def\ie{\emph{i.e.}\;}
\def\lc{\emph{loc.cit.}\;}
\def\cB{{\mathcal B}}
\def\cE{{\mathcal E}}
\def\cF{{\mathcal F}}
\def\cM{{\mathcal M}}
\def\cO{{\mathcal O}}
\def\cJ{{\mathcal J}}
\def\cL{{\mathcal L}}
\def\cR{{\mathcal R}}
\def\cS{{\mathcal S}}
\def\sH{{\mathscr H}}
\def\sY{{\mathscr Y}}
\def\fC{{\mathfrak C}}
\def\fF{{\mathfrak F}}
\def\fX{{\mathfrak X}}
\def\fY{{\mathfrak Y}}
\def\fZ{{\mathfrak Z}}
\def\what{\widehat}
\def\veps{\varepsilon}
\def\an{{\rm an}}
\def\sp{{\rm sp}}
\def\Spf{{\rm Spf\,}}
\def\ul{\underline}
\def\ol{\overline}
\def\iso{\xrightarrow{\ \sim\ }}
\def\kc{{k^\circ}}
\def\kt{\widetilde{k}}
\def\wt{\what{\otimes}}
\author{by \\  \\ Francesco Baldassarri\thanks{Universit\`{a} di Padova,
Dipartimento di matematica pura e applicata, Via Trieste, 63, 35121 Padova, Italy.}}
\title{Radius of convergence of $p$-adic connections \\
and  the Berkovich ramification locus}
\begin{document}

\maketitle

\begin{abstract} 
We apply  the theory of  the radius of convergence of a $p$-adic connection \cite{Cont} 
to the special case of the direct image of the constant connection via a finite morphism of 
compact $p$-adic curves, smooth in the sense of rigid geometry. In the case of an \'etale covering of curves with good reduction, we get a  
lower bound for that radius, corollary \ref{appltriv},  and obtain, via corollary \ref{Newton}, a new geometric  proof of a variant of 
 the $p$-adic Rolle theorem of  Robert and Berkovich, theorem \ref{RolleAN}. 
 We  take this opportunity to clarify the relation between the notion of radius of convergence used in  \cite{Cont}  and the more intrinsic one used by Kedlaya \cite[Def. 9.4.7]{Ke2}.
 \end{abstract}

\tableofcontents

\medskip
\setcounter{section}{-1}
\section{A geometric $p$-adic Rolle theorem}
Let $(k,|~ |)$ be a complete algebraically closed  extension of $(\Q_{p},|~|_{p})$, with  $|p|_{p} =p^{-1}$, and let $\kc$ be the ring of integers of $k$.  We consider 
$k$-analytic spaces in the sense of Berkovich. 
We want to illustrate our  theory of the radius of convergence of a $p$-adic connection \cite{Cont}, by deducing from it a conceptual proof  of a global form of 
 the $p$-adic Rolle theorem of  Robert  \cite[\S 2.4]{Robert}, \cite[Prop. A.20]{Potential} in the stronger form due to  Berkovich  (corollary \ref{BerkRolle} below). Our result 
 indicates a new approach to the problem and, in favorable global situations, offers a finer geometric understanding.
 \par
 
Let $\varphi : Y \to X$ be a morphism of smooth  $k$-analytic curves. If $\varphi$ is \'etale at a $k$-valued point  $y \in Y(k)$, then, as in the familiar complex case, $\varphi$ induces an open embedding $\varphi_{|U}: U \hookrightarrow X$, of an open neighborhood $U$ of $y$, in $X$. But,  as is rather the case in algebraic geometry, this property may fail
at a more general type of point $y \in Y$, even if  $\varphi$ is \'etale at $y$ \cite{DR2}.  

\begin{defn} \label{critical}  Let $\varphi : Y \to X$ be a morphism of rig-smooth  strictly $k$-analytic curves. We say that $y \in Y(k)$ is  a \emph{critical point} of $\varphi$ if 
the differential $d\varphi_{y} = 0$, \ie if $\varphi$ not rig-\'etale at $y$. We denote by 
${\rm Crit} (\varphi)$ the set of critical points of $\varphi$ and let
  $B(\varphi) = \varphi( {\rm Crit}(\varphi)) \subset X(k)$ be the \emph{classical branch locus} of $\varphi$, and $Z(\varphi) = \varphi^{-1}(B(\varphi)) \subset Y(k)$  be the  (saturation of the)  \emph{classical ramification locus} of $\varphi$. We define the  \emph{Berkovich ramification locus} $\cR_{\varphi}$ of $\varphi$ as the  set of points of $Y$ at which $\varphi$ is not a local open embedding. 
\end{defn}
So,   $\cR_{\varphi}$ is a  closed subset of $Y$ and 
$\cR_{\varphi} \cap \, Y(k) = {\rm Crit} (\varphi)$. We are interested in bounding from below the distance  of $\cR_{\varphi}$ from a non-critical $k$-valued point $y \in Y(k)$, in the case of a finite morphism $\varphi$, as above,  of compact curves.
  
\par
Our interest in this topic arose from reading Faber's  papers \cite{Xa1} \cite{Xa2}, where this question is answered, via explicit computations, for a non-constant rational function $\varphi$, viewed as a finite flat map $\varphi  : \Pan \to \Pan$, of the $k$-analytic projective line $\Pan$ to itself. The novelty in Faber's paper  concerns   the case of an open disk $D \subset \Pan$, with $D \cap {\rm Crit} (\varphi) = \emptyset$,  such that  $\varphi(D) = \Pan$, a case which cannot be deduced from the classical statement. 

We cannot  prove  Faber's result by our method.  
We prove instead
\begin{thm} \label{RolleAN} Let $\varphi: Y \to X$ be an \'etale covering  of compact rig-smooth strictly $k$-analytic curves with good reduction.  Let  $D \subset Y$  be any open disk equipped with a \emph{normalized coordinate} $T: D \iso D(0,1^{-})$.
 Then, 
 for any open disk $D^{\p} \subset D$ of  $T$-radius $\leq p^{-\frac{1}{p-1}}$, $\varphi$ induces an open embedding  $D^{\p}  \to X$. 
 \end{thm}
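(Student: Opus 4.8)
The plan is to recast the open embedding property as a statement about the radius of convergence of the direct image connection $\varphi_{*}(\cO_{Y}, d)$ on $X$, and then to feed in the two inputs advertised in the abstract: the lower bound of corollary \ref{appltriv} and the Newton-polygon interpretation of corollary \ref{Newton}. Since $\varphi$ is an \'etale covering of degree $n = \deg\varphi$, the sheaf $\cM = \varphi_{*}(\cO_{Y}, d)$ is a rank-$n$ vector bundle with connection on $X$, and its horizontal sections over an open $V \subset X$ are exactly the analytic sections of $\varphi$ over $V$, \ie the connected components of $\varphi^{-1}(V)$ mapping isomorphically onto $V$. Thus, for a non-critical point $y \in D$ with image $x = \varphi(y)$, the radius of convergence of $\cM$ at $x$ along the branch through $y$ should equal the supremum of the radii $\rho$ for which the component of $\varphi^{-1}(D(x,\rho^{-}))$ containing $y$ maps isomorphically onto $D(x, \rho^{-})$, that is, the injectivity radius of $\varphi$ at $y$.

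First I would make this dictionary precise. By the general theory of \cite{Cont}, the radius of convergence of $\cM$ at $x$ is governed by the Newton polygon of the connection, and corollary \ref{Newton} translates a lower bound on that radius into a lower bound on the injectivity radius of $\varphi$. The only point requiring care is the comparison of the intrinsic radius of convergence, measured on $X$, with the $T$-radius on $D$: because $\varphi$ is rig-\'etale and both $X$ and $Y$ have good reduction, $\varphi$ induces an isometry on the relevant disks in the normalized coordinate $T$ and a suitable normalized coordinate on $X$, so that the $T$-measured injectivity radius of $\varphi$ at $y$ coincides with the radius of convergence of $\cM$ at $x$.

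Next I would invoke corollary \ref{appltriv}. The \'etale and good reduction hypotheses endow $\cM$ with an integral structure over $\kc$ extending across the reduction, so that corollary \ref{appltriv} applies and gives the uniform bound: the radius of convergence of $\cM$ at every point of $\varphi(D)$, along every branch, is at least $p^{-\frac{1}{p-1}}$. Transported through the dictionary of the previous step, this says that for every $y_{0} \in D$ the disk $D(y_{0}, (p^{-\frac{1}{p-1}})^{-})$, which is contained in $D$ because $p^{-\frac{1}{p-1}} < 1$ and $D$ has $T$-radius $1$, maps isomorphically onto its image in $X$. Given an arbitrary open disk $D^{\p} \subset D$ of $T$-radius $\leq p^{-\frac{1}{p-1}}$, choosing any $y_{0} \in D^{\p}$ as center yields $D^{\p} \subseteq D(y_{0}, (p^{-\frac{1}{p-1}})^{-})$, and the restriction of an open embedding to a subdomain is again an open embedding; hence $\varphi_{|D^{\p}} : D^{\p} \to X$ is an open embedding, as claimed.

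The main obstacle is the dictionary of the first two steps: making rigorous the identification of the intrinsic, $X$-side radius of convergence of $\varphi_{*}(\cO_{Y}, d)$ with the $Y$-side, $T$-measured injectivity radius of $\varphi$, and checking that the normalized coordinate together with good reduction makes this comparison exact rather than merely up to a multiplicative constant. Everything else is bookkeeping once the two corollaries are in place; in particular the specific value $p^{-\frac{1}{p-1}}$ is not proved here but inherited from corollary \ref{appltriv}, where it enters as the radius of convergence of the $p$-adic exponential and reflects the integral structure of $\cM$ forced by good reduction.
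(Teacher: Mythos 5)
Your overall architecture matches the paper's: identify local sections of $\varphi$ with horizontal sections of $(\cF,\nabla_{\cF})=(\varphi_{\ast}\cO_{Y},\varphi_{\ast}(d_{Y/k}))$ (this is lemma \ref{sections}), bound their radius of convergence on the $X$-side by the trivial estimate and the transfer theorem, and transport the bound back to $D$. But there is a genuine gap at exactly the point you flag as ``the main obstacle'' and then dispose of by assertion. It is not true that $\varphi$ induces an isometry between residue disks in normalized coordinates: writing $\varphi$ on a maximal open disk $E\subset Y$ as $S=h(T)$, the derivative $dh/dT$ has constant absolute value $|\pi_{\Phi}|$, where $\pi_{\Phi}\in\kc$ is defined by $\Phi^{\ast}\Omega^{1}_{\fX/\kc}=\pi_{\Phi}\Omega^{1}_{\fY/\kc}$ (proposition \ref{deriv}), and $|\pi_{\Phi}|$ can be strictly less than $1$ even for an \'etale covering of curves with good reduction --- for instance when the reduction of $\Phi$ is inseparable, as for multiplication by $p$ on an elliptic curve with supersingular reduction; this is why remark \ref{Riemann} cites L\"utkebohmert for a bound on $\ord_{p}\pi_{\Phi}$. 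Correspondingly, you misquote corollary \ref{appltriv}: it asserts convergence of the solutions on $D_{\fX}(b,p^{-\frac{1}{p-1}}|\pi_{\Phi}|^{-})$, not on a disk of $X$-side radius $p^{-\frac{1}{p-1}}$, because the matrix of the system in formal coordinates has norm $|\pi_{\Phi}|^{-1}$ at the generic point, so the trivial estimate (\ref{trivial}) picks up the factor $|\pi_{\Phi}|$.

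The paper's proof is precisely the bookkeeping you omit: the two occurrences of $|\pi_{\Phi}|$ cancel. The section $\sigma$ of $\varphi$ converges on the $X$-side disk $D_{b}$ of radius $p^{-\frac{1}{p-1}}|\pi_{\Phi}|$, and the Newton-lemma argument in the proof of corollary \ref{Newton} shows that $\varphi$ scales distances on $D_{a}=\sigma(D_{b})$ by exactly the factor $|\pi_{\Phi}|$, whence $D_{a}$ has $T$-radius exactly $p^{-\frac{1}{p-1}}$. Your two unproved claims (the isometry, and an $X$-side radius of $p^{-\frac{1}{p-1}}$) err in compensating directions and so land on the correct answer, but each is false in general. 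To repair the argument you must introduce $\pi_{\Phi}$, prove that $|d\varphi|$ is constant equal to $|\pi_{\Phi}|$ on residue disks, and verify the exact cancellation; your final reduction (centering $D^{\p}$ at any $y_{0}\in D^{\p}(k)$ and restricting the resulting open embedding) is fine as it stands.
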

 The following is Berkovich generalization of the $p$-adic Rolle theorem of  \cite[\S 2.4]{Robert}.
 \begin{cor} (Berkovich)  \label{BerkRolle}Let $\varphi : D(0,1^{-}) \to {\bf A}$ be any \'etale morphism of the open unit disk to the $k$-analytic affine line $\bf A$, then the restriction of $\varphi$ to any open disk of radius  $p^{-\frac{1}{p-1}}$ is an open embedding.    
 \end{cor}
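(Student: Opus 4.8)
The plan is to deduce the statement from Theorem \ref{RolleAN} by an exhaustion argument, realizing $\varphi$ over each sufficiently small closed subdisk as a finite étale covering of affinoid curves of good reduction. First I would fix an open disk $D'\subset D(0,1^{-})$ of radius $p^{-\frac{1}{p-1}}$, centred at a point $a$ (so $|a|<1$), and reduce to proving that $\varphi_{|D'}$ is injective: as $\varphi$ is étale, $\cR_\varphi\cap D'(k)=\emptyset$, so $\varphi$ is already a local open embedding at every point of $D'$, and injectivity will at once promote this to a global open embedding $D'\to{\bf A}$. The guiding remark is that $D'$ is the nested union of the open subdisks $D(a,r^{-})$ with $r<p^{-\frac{1}{p-1}}$, so it is enough to establish injectivity of $\varphi$ on each $D(a,r^{-})$ and then pass to the limit.

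Next, for a fixed $r<p^{-\frac{1}{p-1}}$ I would pick a radius $\rho$ with $r\,p^{\frac{1}{p-1}}\le\rho<1$ and put $Y=\bar D(a,\rho)$, which is contained in $D(0,1^{-})$ since $\max(|a|,\rho)<1$. Because $\varphi$ has no critical point on $D(0,1^{-})$, its derivative is a unit on $Y$, so Weierstrass preparation exhibits $\varphi$ as a finite map of $Y$ onto the closed disk $X:=\varphi(Y)\subset{\bf A}$, unramified by the nonvanishing of $d\varphi$; thus $\varphi:Y\to X$ is a finite étale covering. Both $Y$ and $X$ are closed disks, hence compact, rig-smooth, strictly $k$-analytic curves of good reduction, and Theorem \ref{RolleAN} applies to $\varphi:Y\to X$. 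Endowing the open disk $D(a,\rho^{-})\subset Y$ with the normalized coordinate $T=(z-a)/\rho$, the subdisk $D(a,r^{-})$ acquires $T$-radius $r/\rho\le p^{-\frac{1}{p-1}}$, and the theorem yields that $\varphi$ induces an open embedding on $D(a,r^{-})$; in particular $\varphi$ is injective there.

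Letting $r\uparrow p^{-\frac{1}{p-1}}$ (equivalently $\rho\uparrow 1$) and using that the disks $D(a,r^{-})$ exhaust $D'$, the injectivity on each piece assembles into injectivity of $\varphi_{|D'}$, which is the assertion. Note that taking the target to be the affine line $\bf A$ is exactly what keeps $\varphi$ pole-free on every closed subdisk, so that the covering step above makes sense. I expect the only real subtlety to be the bookkeeping of radii under normalization: passing to the normalized coordinate on $Y=\bar D(a,\rho)$ dilates lengths by $1/\rho$, so the theorem's bound (a $T$-radius $\le p^{-\frac{1}{p-1}}$) is only reached for $z$-radii strictly below $p^{-\frac{1}{p-1}}$, and it is precisely the description of the open disk of radius $p^{-\frac{1}{p-1}}$ as the union of the smaller disks $D(a,r^{-})$ that restores the sharp constant in the limit. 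The remaining point to check, namely that $\varphi:Y\to X$ is finite (not merely quasi-finite) and of good reduction as a covering of affinoids, is furnished by the Newton-polygon description of $\varphi$ on each closed disk.
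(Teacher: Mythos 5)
Your proposal is correct and follows essentially the same route as the paper: the paper's proof likewise restricts $\varphi$ to strictly affinoid (closed) subdisks $E\subset D(0,1^{-})$, observes that $\varphi_{|E}:E\to\varphi(E)$ is a finite \'etale covering of curves with good reduction, and applies Theorem \ref{RolleAN}. Your extra care with the exhaustion by $D(a,r^{-})$ and the dilation $T=(z-a)/\rho$ of the normalized coordinate merely makes explicit the radius bookkeeping that the paper leaves implicit.
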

 \begin{proof}
The statement follows from the theorem, since the restriction of $\varphi$ to any strictly affinoid disk $E \subset D$ induces a finite map $\varphi_{|E}: E \to \varphi(E)$ to which the theorem applies. 
 \end{proof}
 \begin{cor} \label{RolleANcor} Let $\varphi: Y \to X$ be a finite morphism of compact rig-smooth strictly $k$-analytic curves.  Let  $D \subset Y \setminus Z(\varphi)$  be any open disk with $T$  a  normalized coordinate on $D$. 
  Then, the $T$-distance of $\cR_{\varphi} \cap D$ from $D(k)$ is $\geq p^{-\frac{1}{p-1}}$ \end{cor}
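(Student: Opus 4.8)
The plan is to deduce Corollary \ref{RolleANcor} from Theorem \ref{RolleAN} by exhausting $D$ with strictly affinoid closed disks, in the spirit of the proof of Corollary \ref{BerkRolle}; the new input is the hypothesis $D \cap Z(\varphi) = \emptyset$, which guarantees that $\varphi$ is rig-\'etale along $D$.

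First I would reduce to a pointwise assertion. The stated inequality on $T$-distances is equivalent to saying that, for every $y \in D(k)$, the map $\varphi$ is a local open embedding on the open subdisk of $T$-radius $p^{-\frac{1}{p-1}}$ centered at $y$, i.e. that $\cR_\varphi$ meets no such subdisk. So I fix $y$, set $a = T(y)$, and choose $\rho \in |k^{\times}|$ with $|a| < \rho < 1$. Let $E = \{\,|T| \leq \rho\,\} \subset D$ be the associated strictly affinoid closed disk. Since $E \subset D$ is disjoint from $Z(\varphi)$, it contains no critical point, so the finite map $\varphi_{|E} : E \to \varphi(E)$ is rig-\'etale; being finite, rig-\'etale and surjective onto $\varphi(E)$, it is an \'etale covering. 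Here $E$ is a closed disk and so has good reduction, and I claim that $\varphi(E)$ (or, if preferred, a good-reduction strictly affinoid neighborhood of it in $X$, of which $E$ is a connected component of the preimage) may be taken to have good reduction as well, so that Theorem \ref{RolleAN} applies to $\varphi_{|E}$.

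Inside $E$ the point $y$ lies in the residue disk $\{\,|T - a| < \rho\,\}$, on which $(T-a)/c$, for any $c$ with $|c| = \rho$, is a normalized coordinate. Applying Theorem \ref{RolleAN} to the \'etale covering $\varphi_{|E}$ and to this residue disk, every open subdisk of normalized radius $\leq p^{-\frac{1}{p-1}}$, that is of $T$-radius $\leq \rho\, p^{-\frac{1}{p-1}}$, is mapped by $\varphi$ through an open embedding and hence is disjoint from $\cR_\varphi$. Letting $\rho \to 1^{-}$ through admissible values then shows that no point of $\cR_\varphi \cap D$ lies at $T$-distance $< p^{-\frac{1}{p-1}}$ from $y$; as $y \in D(k)$ was arbitrary, the claimed bound follows. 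I expect the one genuine obstacle to be the verification that $\varphi(E)$, or the ambient good-reduction affinoid, really has good reduction, so that the hypotheses of Theorem \ref{RolleAN} are met on the nose: this is exactly where $D \cap Z(\varphi) = \emptyset$ is used, since the resulting rig-\'etaleness of $\varphi_{|E}$ is what keeps the reduction a covering of smooth reductions and preserves the coordinate $T$ up to the scaling by $c$ that produces the sharp constant $p^{-\frac{1}{p-1}}$.
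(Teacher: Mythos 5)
Your strategy---exhaust $D$ by strictly affinoid closed disks $E=\{|T|\leq\rho\}$ and apply Theorem \ref{RolleAN} to $\varphi_{|E}$---is in the spirit of the paper's proof of Corollary \ref{BerkRolle}, but it has a genuine gap exactly where you flag an ``obstacle'', and the gap is larger than you suggest. The assertion that $\varphi_{|E}\colon E\to\varphi(E)$ is a \emph{finite} morphism is not automatic: for a finite $\varphi\colon Y\to X$ and an affinoid domain $E\subset Y$, the induced map $E\to\varphi(E)$ is in general only quasi-finite (compare the inclusion of a subdisk into a larger disk); it is finite precisely when $E$ is a union of connected components of $\varphi^{-1}(\varphi(E))$, equivalently when the Shilov point $\alpha_{0,\rho}$ of $E$ is not sent into the topological interior of $\varphi(E)$ in $X$. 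Nothing in your argument rules this out, nor does anything prevent $\varphi(E)$ from being an affinoid that is not a disk (several Shilov boundary points, or reduction of positive genus), so neither finiteness nor good reduction of the target is available. Rig-\'etaleness of $\varphi$ along $E$ is a condition at the $k$-rational points of $E$ and controls none of this; moreover the hypothesis $D\cap Z(\varphi)=\emptyset$, which is strictly stronger than $D\cap{\rm Crit}(\varphi)=\emptyset$, is needed for a different purpose than the one you assign to it.

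What is actually required, and what the paper's proof supplies, is the global fact that $\varphi(D)$ is contained in an open disk of $X$ except in one degenerate case. The paper deduces Corollary \ref{RolleANcor} from Corollary \ref{BerkRolle} as follows: if $X$ is projective of genus $\geq 1$, then by \cite[4.5.3]{Berkovich} the image $\varphi(D)$ lies in an open disk of $X$; if $X$ is affinoid, it is an affinoid domain in a projective curve and the same applies; the only case left is $X=\Pan$ with $\varphi_{|D}$ surjective, and there $D\cap Z(\varphi)=\emptyset$ forces $B(\varphi)=\emptyset$, so $\varphi$ is an isomorphism of $\Pan$, contradicting surjectivity of $\varphi_{|D}$. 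Once one knows $\varphi(D)\subset D'$ for an open disk $D'\subset X$, your exhaustion does go through: $\varphi(E)$ is then a closed subdisk of $D'$ by Weierstrass preparation, $E\to\varphi(E)$ is finite \'etale with good reduction, and your rescaling of the coordinate by $c$ with $|c|=\rho$ and the limit $\rho\to 1^-$ are correct. So the missing idea is not a technicality about reductions but the containment of $\varphi(D)$ in a disk, i.e., the case analysis on the global structure of $X$.
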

 \begin{proof}
The statement may be deduced from (\ref{BerkRolle}), as follows. We assume, with no loss of generality, that $X$ and $Y$ are connected. Suppose first that $X$ is \emph{projective}. If  the genus of $X$ is $\geq 1$, it follows from \cite[4.5.3]{Berkovich} that $\varphi(D)$ is contained in an open disk contained in $X$. So,  the classical theorem applies.  The case when $X$ is the projective line and $\varphi_{|D}$ is not surjective, is covered by the classical  theorem, too.  If $\varphi(D) = X = \Pan$, then, $\varphi$ being finite,  the $p$-adic GAGA  implies that $Y$ is projective as well. 
 But then the assumption on the branch locus is only verified if $\varphi$ is an isomorphism, which contradicts $\varphi(D) = \Pan$. 
  Now, (\cf \cite[3.2]{dejong}) a  compact rig-smooth curve is either affinoid or projective.  But we know  \cite[1.2.5]{Cont} that, if $X$ is affinoid, then it is an affinoid domain in a connected projective curve $C$, \emph{formal} with respect to a strictly semistable model $\fC$ of $C$. So, again, \cite[4.5.3]{Berkovich} 
 shows that the only case not covered by the classical theorem is when  $X = \Pan$ and $\varphi$ is surjective, which leads us back to the former discussion. 
 \end{proof}
 So, the three statements above (\ref{RolleAN}), (\ref{BerkRolle}) and (\ref{RolleANcor}), are equivalent. A beautifully simple analytic proof of (\ref{BerkRolle}) has been communicated to the author by Professor Berkovich.  We reproduce it here with his permission.
\begin{proof} (Berkovich proof of (\ref{BerkRolle}))
Any morphism $\varphi: D = D(0,1^{-}) \to {\bf A}$ is a formal power series $\varphi = \sum_{n=0}^{\infty} a_{n}T^{n}$, convergent on $D$.
One can easily see that
\ben
\item
 $\varphi$ is \'etale if and only if the derivative $\frac{d \varphi}{dT} = \sum_{n=1}^{\infty} n a_{n}T^{n -1}$
 is invertible  on $D$, \ie,  for every
$0 < \rho <1$, one has $|a_{1}| > |n a_{n}| \rho^{n-1}$,  for all $n \geq 2$;
 \item
$\varphi$  induces  an open immersion $D(0,r^{-}) \hookrightarrow {\bf A}$ if and only if, for every $0<\rho<r$, one has $|a_{1}|> |a_{n}| \rho^{n-1}$ for all
$n \geq 2$.
 \een

The claim is equivalent to the following simple fact for $\varphi$ as above: if 
 $|a_{1}| > |n a_{n}| \rho^{n-1}$ 
for all $n \geq 2$, then 
 $|a_{1}| > |a_{n}| (\rho |p|^{\frac{1}{p-1}})^{n-1}$  for all $n \geq 2$. In its turn, this fact it is equivalent to
following inequality: 
 $|n| \geq |p|^{\frac{n-1}{p-1}}$, for all $n \geq 2$. 
 If $n$ is not divisible by $p$, the latter inequality is trivial. Suppose that $n = p^{k}m$ with $m$ prime
to $p$ and $k \geq 1$. Then the latter inequality is equivalent to the inequality 
 $p^{k}m -1 \geq k(p-1)$. One has 
 $$p^{k}m -1 \geq p^{k} -1 = (p^{k-1} + \dots+ 1)(p-1) \geq k(p-1) \; ,$$
   and the claim  follows.
\end{proof}
We hope however  that our proof of  (\ref{RolleAN}) in section \ref{HEART} below, based on the theory of the radius of convergence of $p$-adic connections, will be instructive. 
 \begin{rmk} \label{disks}
  A  $k$-analytic curve $E$ is an  \emph{open disk} if it is  isomorphic to the open analytic domain 
$$ D(0,r^{-}) =  \{ x \in \Aan | |T(x)| <r \}
\; ,
$$
of  the $k$-analytic $T$-line $\Aan$, for some $r \in \R_{>0}$.   
The only isomorphism invariant of $E$ is then the image of $r$ in $\R_{>0}/|k^{\times}|$. We say that the open disk $E$ is  \emph{strict},  if  $r \in |k^{\times}|$, \ie if  $E$ is isomorphic to the standard unit open disk $D(0,1^{-})$.   An open disk which is a relatively compact analytic domain in a $k$-analytic curve $X$, is strict (resp. non strict) if and only if its boundary point in $X$ is of type 2 (resp. 3). This topic will be clarified in \cite{ducros}. In particular, the point $\zeta \in Y$ at the boundary of $D$ in the statements \ref{RolleAN} and \ref{RolleANcor}, is a point of type 2. We also use the notation
$$ D(0,r^{+}) =  \{ x \in \Aan | |T(x)| \leq r \}
\; ,
$$
for the standard closed disk in $\Aan$, and $\alpha_{0,r}$ for its maximal point. 
\end{rmk}
 \begin{rmk} \label{height} The $T$-radius of $D^{\p}$ appearing in the theorem, will be called the \emph{relative radius of $D^{\p}$ in $D$} or the \emph{height}  of the semi-open annulus $D \setminus D^{\p}$. It is an analytic invariant $0< h(D \setminus D^{\p}) \leq 1$ of  $D \setminus D^{\p}$ as in \cite[\S 2]{BL0}.
 \end{rmk}
 \begin{rmk} \label{imagedisk} Let $\varphi : D(0,1^{-}) \to D(0,1^{-})$ be a morphism of the open unit $k$-disk to itself, such that $\varphi (0) = 0$, while $\varphi (D(0,1^{-}))$ is not contained in  $D(0,\rho^{-})$, for any $\rho <1$. Then $\varphi$ is surjective.  This follows from the elementary theory of Newton polygons.  In fact, in the standard coordinate $T$, $\varphi$ is represented by a power series $\varphi(T) = \sum_{i=r}^{\infty} a_{i} T^{i}$, with $a_{i} \in \kc$, such that $a_{r} \not= 0$ for some $r \geq 1$, and $\inf_{i} \ord_{p} a_{i} = 0$. So, for any $a \in D(0,1^{-})$, $\ord_{p} a > 0$,  the Newton polygon of $\varphi(T) - a$ has a side with negative slope $-\sigma$, where 
 $$\sigma := \frac{\ord_{p} a - \ord_{p} a_{r+j}}{r+j} > 0 \; ,$$
for some $j \geq 0$.  See Fig. 1. 

 \begin{figure}[ht]
\begin{picture}(330,170)(-50,0)      
\put(0,10){\vector(1,0){330}}        
\put(25,0){\vector(0,1){150}}         
\put(65,50){\line(4,-1){40}}
\thicklines               
\put(0,60){\makebox(0,0){$(0,\ord_{p} a)$}}
\put(65,70){\makebox(0,0){$\ast$}}
\put(95,100){\makebox(0,0){$\ast$}}
\put(265,20){\makebox(0,0){$\ast$}}
\put(295,18){\makebox(0,0){$\ast$}}
\put(235,45){\makebox(0,0){$(i,\ord_{p} a_{i})$}}
\put(92,73){\makebox(0,0){$(r,\ord_{p} a_{r})$}}
\put(65,70){\line(3,-2){45}}
\put(65,70){\line(0,1){100}}
\put(109,39){\makebox(0,0){$\ast$}}
\put(209,39){\makebox(0,0){$\ast$}}
\put(200,173){\makebox(0,0)[b]{}}
\put(25,60){\makebox(0,0){$\ast$}}
\put(25,60){\line(4,-1){220}}
\put(147,45){\makebox(0,0){$(r+j,\ord_{p} a_{r+j})$}}
\put(109,40){\line(5,-1){75}}  
\put(255,18){\line(1,0){75}}  
\put(80,25){\makebox(0,0){slope = $- \sigma$}}
\end{picture}
\caption{\label{fig 0} The Newton polygon of $\varphi (T) - a$}
\end{figure}
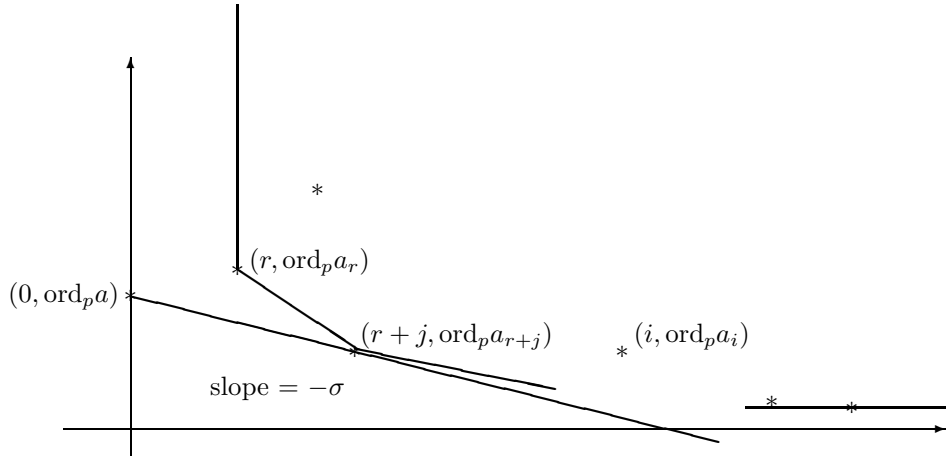
 \end{rmk}
 \begin{rmk}
 \label{Faber} The rational function $\varphi(T) = \frac{T^{p+1} - p}{T}$ restricts to a surjective \'etale map $D(0,1^{-}) \to \Pan$ to which the classical theorem does not apply, but Faber's does.  This is example 5.3 in  \cite{Xa2}. Notice that $0 \in Z(\varphi) \setminus  {\rm Crit}(\varphi)$, so that our statement does not apply. 
  \end{rmk}  
Our proof is based on the most basic result on $p$-adic differential systems, namely the so-called \emph{trivial estimate} for the radius of convergence of their solutions \cite[p. 94]{DGS} and on the \emph{transfer principle} into a disk with no singularity  \cite[IV.5]{DGS}.
\begin{rmk} \label{compactim} As we observed before, in the situation of (\ref{RolleANcor}), if $\varphi(D)$ is compact,  then $\varphi(D) = X  \cong \Pan$, and  therefore, if $\varphi$ is unramified, it is an isomorphism, so that $\cR_{\varphi} = \emptyset$.
\end{rmk}
\begin{rmk} \label{noncompactim} We observed in the proof of (\ref{RolleAN}) that if $\varphi(D)$ is not compact,  then 
it is contained in a strict open disk in $X$. From remark \ref{imagedisk} it then follows that $\varphi(D) = E$ is a strict open disk in $X$. 
\end{rmk}
\par
Although not really needed for the conclusion of our proof (a reference to either one of \cite{Cont} or \cite{Ke2}, independently, would suffice),
we   recall  in the last section  \label{Graphs}  the main properties of the radius of convergence of a connection on a compact rig-smooth $p$-adic analytic curve $X$ with poles at a finite subset  $Z \subset X(k)$ \cite{Cont}. 
In that paper,  we consider  a (sufficiently fine) strictly semistable $\kc$-formal model $\fX$ 
of $X$ and an extension $\fZ$ of $Z$ to a divisor of the smooth part of $\fX$, \'etale over $\kc$. We  then introduce a 
\emph{global} notion of \emph{$(\fX,\fZ)$-normalized radius of convergence  $\cR_{\fX,\fZ}(x,(\cM,\nabla))$  of  $(\cM,\nabla) \in {\bf MIC}((X \setminus Z)/k)$ at $x \in X \setminus Z$}. (In case $\fZ = \emptyset$, we simply write  $\cR_{\fX}$ instead of $ \cR_{\fX,\emptyset}$). 
 We take this opportunity to completely clarify the relation between $\cR_{\fX,\fZ}(x,(\cM,\nabla))$ and  the \emph{local} notion of \emph{intrinsic generic radius of convergence $IR(\cM_{(x)},\nabla)$ of  $(\cM,\nabla)$ at $x$}, for a point $x \in X$ of Berkovich type 2 or 3, used by  Kedlaya \cite[Def. 9.4.7]{Ke2}. 
 The coincidence of the two notions when $x$ is a point of the skeleton $\Gamma_{\fX,\fZ} \setminus Z$, will be useful in general. 
 It is here only used implicitly (in an obvious case)  in the conclusion of our proof.
\par
I am indebted to V. Berkovich  and to X. Faber for their explanations on the $p$-adic Rolle theorem and to M. Temkin for pointing out a mistake in a previous version of this paper. Discussions with  Y. Andr\'e, P. Berthelot, M. Cailotto,  L. Illusie,  Q. Liu,  M. Raynaud have been most useful: I thank them heartly for that. It is a pleasure to aknowledge the well-founded criticism and  invaluable suggestions provided by the referee. 
 \section{A change in viewpoint}
 
 \parag \label{notation}
  Let $\varphi: Y \to X$ be a finite morphism of compact connected rig-smooth strictly $k$-analytic curves. 
  We use by default (strictly) $k$-analytic spaces in the sense of Berkovich \cite{BerkovichEtale} endowed with their natural topology.  
  By  (a minor variation of) \cite[Cor. 3.4]{dejong}, they are good strictly $k$-analytic  spaces. Now, a finite morphism of good analytic spaces is good and closed, so that proposition 3.2.9 of \cite{BerkovichEtale}  applies, and $\varphi$ is finite flat and, in particular, open. It follows that, for any strict affinoid $U \subset X$, $\varphi^{-1}U$ is affinoid and $\varphi_{\ast}\cO_{\varphi^{-1}U}$ is locally free for the Zariski topology of $U$. Then, $\varphi$ being good, it follows that  $\varphi_{\ast}\cO_{Y}$ is locally free for the natural topology of $X$. Since
 $X$ is connected, the degree of $\varphi$ is a constant $d$ on $X$. 
Moreover, $\varphi$ is generically \'etale (\ie \'etale but at a discrete set of $k$-rational points) because we are in characteristic zero.  We recall that an irreducible  compact $k$-analytic curve is either the analytification of a projective curve or it is  affinoid  \cite{FM}, \cite[Prop. 3.2]{dejong}.  
 \parag
Let $B = B(\varphi) \subset X(k)$ and $Z = Z(\varphi)$ be as before. Let $\cJ_{B}$ (resp. $\cJ_{Z}$) denote the ideal sheaf of $B$ (resp. $Z$). 
For a coherent $\cO_{X}$-module (resp. $\cO_{Y}$-module) $\cF$, we denote by $\cF(\ast B)$ (resp. $\cF(\ast Z)$) the union 
$\bigcup_{N \geq 1}\cJ_{B}^{-N} \otimes \cF$ (resp.  $\bigcup_{N \geq 1}\cJ_{Z}^{-N} \otimes \cF$).
 
The map $\varphi$ restricts to an \'etale covering $Y \setminus Z \to X \setminus B$ of degree $d$. 
Hence, $\Omega^{1}_{Y  \setminus Z} = \varphi^{\ast} \Omega^{1}_{X \setminus B}$ and $\varphi_{\ast }\Omega^{1}_{Y \setminus Z} =  \varphi_{\ast} \cO_{Y \setminus Z} \otimes_{\cO_{X \setminus B}}\Omega^{1}_{X \setminus B}$, by the projection formula.  More precisely,  
$\Omega^{1}_Y \subset \Omega^{1}_Y(\ast Z) = \varphi^{\ast} ( \Omega^{1}_{X}(\ast B))$ and $\varphi_{\ast } (\Omega^{1}_{Y} (\ast Z)) =  \varphi_{\ast} \cO_Y \otimes_{\cO_X}\Omega^{1}_{X}(\ast B)$. 
 The direct image 
 
\beq \label{projformula}
 \varphi_{\ast}(d_{Y/k}: \cO_{Y} \to \Omega^{1}_{Y}(\ast Z))\;\;\; = \;\;\; \varphi_{\ast}(d_{Y/k}) :  \varphi_{\ast} \cO_{Y} \to  \varphi_{\ast} \cO_{Y}  \otimes \Omega^{1}_X(\ast B) \; ,
\eeq
is then a connection on the locally free $\cO_{X}$-module $\cF := \varphi_{\ast} \cO_{Y}$ of rank $d$, with poles at $B$. We denote by ${\bf MIC}(X(\ast B)/k)$ the tannakian category of such objects, so 
 that  
 \beq
 \label{dirconn}
 (\cF,\nabla_{\cF}) := (\varphi_{\ast} \cO_{Y}, \varphi_{\ast}(d_{Y/k})) \in {\bf MIC}(X(\ast B)/k) \; .
\eeq
 Similarly, 
 \beq
 \label{invconn}
 (\cE,\nabla_{\cE}) := \varphi^{\ast}(\cF,\nabla_{\cF}) = (\varphi^{\ast}\varphi_{\ast} \cO_{Y}, \varphi^{\ast} \varphi_{\ast}(d_{Y/k})) \in {\bf MIC}(Y(\ast Z)/k) \; .
\eeq 
  \begin{parag}
 A more precise version of the formula \ref{projformula} is obtained if we view the pairs $(Y,Z)$, $(X,B)$ as smooth log-schemes over the log-field $(k,k^{\times})$ \cite{Kato}. The analytic map  $\varphi$ induces in fact a finite log-\'etale morphism $\varphi : (Y,Z) \to (X,B)$, locally free of degree $d$, so that 
 $$\Omega^{1}_{Y}(\log Z)  =\varphi^{\ast} \Omega^{1}_{X}(\log B) \; .$$
Therefore formula \ref{dirconn} admits the refinement
\beq
\label{natconnlog}
\varphi_{\ast}(d_{Y/k} : \cO_{Y}  \to \Omega^{1}_{Y}(\log Z))  =  \nabla_{\cF} : \cF  \to \cF  \otimes_{\cO_{X}} \Omega^{1}_{X}(\log B) \; ,
\eeq
which shows that the natural $X/k$-connection with poles along $B$ on the locally free $\cO_X$-module of rank $d$,  $\cF = \varphi_{\ast} \cO_{Y}$, admits logarithmic singularities along $B$. 
Similarly for formula \ref{invconn}, where
 \beq
 \label{natinvconn}
\nabla_{\cE} : \cE  \to \cE  \otimes_{\cO_{Y}} \Omega^{1}_{Y}(\log Z) \; .
\eeq 
\end{parag} 

 \begin{parag} 
  Notice that $\varphi_{\ast} \cO_{Y}$ is also a sheaf of  commutative $\cO_{X}$-algebras and that the multiplication map
 \beq
 \mu_{Y}: \varphi_{\ast} \cO_{Y} \otimes_{\cO_{X}} \varphi_{\ast} \cO_{Y} \longrightarrow \varphi_{\ast} \cO_{Y}
 \eeq
 is horizontal.
 We define two sheaves on $X$. The first
 \beq
 {\cS}ect(Y/X) :=  {\sH}om_{\cO_{X}-{\rm alg}} ({\varphi}_{\ast} \cO_{Y} , \cO_{X}) \; ,
 \eeq
 is a sheaf of finite sets of cardinality $\leq d$.  It is  the \emph{sheaf of local sections} of $Y/X$ or of $\varphi$.  In fact, for any affinoid $U \subset X$, $V := \varphi^{-1}(U) \subset Y$ is an affinoid domain as well, and 
 \begin{eqnarray*}
 Hom_{X}(U,Y) = Hom_{U}(U,V) = Hom_{\cO(U)-{\rm alg}}(\cO(V),\cO(U)) \\
 \\
 = \Gamma(U, {\sH}om_{\cO_{X}-{\rm alg}} ({\varphi}_{\ast} \cO_{Y} , \cO_{X}) ) = \Gamma(U,  {\cS}ect(Y/X) ) \; .
 \end{eqnarray*}
 The second is the  sheaf of $k$-vector spaces of dimension $\leq d$
 \beq
  {\cS}ol(\cF,\nabla_{\cF}) := 
 {\sH}om_{\cO_{X}} ((\cF,\nabla_{\cF}),(\cO_{X},d_{X/k}))^{\nabla} \; ,
 \eeq
 called the \emph{sheaf of local solutions} of $(\cF,\nabla_{\cF})$.  Notice that for any $x_{0} \in X(k) \setminus B$, there exists an open neighborhood $U$ of $x_{0}$, such that ${\cS}ect(Y/X)_{|U}$ is the constant sheaf $\{1,\dots,d\}$ and that $ {\cS}ol(\cF,\nabla_{\cF})_{|U}$ is a $k$-local system of rank $d$.
\end{parag}

The crucial remark is 
\begin{lemma} \label{sections} We have an inclusion of sheaves of sets 
 \beq
 \label{sectionform}
{\cS}ect(Y/X)   \subset   {\cS}ol(\cF,\nabla_{\cF}) \; .
 \eeq
 For any $x \in X \setminus B$, ${\cS}ect(Y/X)  _{x}$ is a $k$-basis of $ {\cS}ol(\cF,\nabla_{\cF})_{x}$, \ie 
 the sheaf of $k$-vector spaces ${\cS}ol(\cF,\nabla_{\cF})$ is  freely generated by its subsheaf  ${\cS}ect(Y/X)$.
\end{lemma}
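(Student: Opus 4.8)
The plan is to prove the two assertions in turn: first the inclusion (\ref{sectionform}), and then that the inclusion induces an isomorphism realizing ${\cS}ol(\cF,\nabla_{\cF})$ as the free $k$-vector space sheaf on the sheaf of sets ${\cS}ect(Y/X)$.

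For the inclusion, recall from the identification $Hom_{X}(U,Y)=\Gamma(U,{\cS}ect(Y/X))$ above that a local section over an open $U\subset X\setminus B$ is an $\cO_{X}$-algebra homomorphism $\sigma:\varphi_{\ast}\cO_{Y}|_{U}\to\cO_{U}$, equivalently a morphism $s:U\to Y$ with $\varphi\circ s=\mathrm{id}_{U}$. Such a $\sigma$ is $\cO_{X}$-linear, being an algebra map, so I would only need to check horizontality, i.e. $(\sigma\otimes\mathrm{id})\circ\nabla_{\cF}=d_{X/k}\circ\sigma$. Since $\nabla_{\cF}=\varphi_{\ast}(d_{Y/k})$ and, over $U\subset X\setminus B$, the relation $\varphi\circ s=\mathrm{id}$ identifies $s^{\ast}\varphi^{\ast}\Omega^{1}_{X}$ with $\Omega^{1}_{X}|_{U}$, this is exactly the functoriality of the exterior derivative for the morphism $s$, namely $s^{\ast}\circ d_{Y/k}=d_{X/k}\circ s^{\ast}$; no log poles intervene, as $\varphi^{-1}(U)\subset Y\setminus Z$. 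Hence $\sigma\in{\cS}ol(\cF,\nabla_{\cF})(U)$, and distinct sections give distinct functionals, which yields (\ref{sectionform}).

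For the basis statement I would argue that it suffices to show that the natural $k$-linear map of sheaves $\underline{k}\langle{\cS}ect(Y/X)\rangle\to{\cS}ol(\cF,\nabla_{\cF})$ induced by (\ref{sectionform}) is an isomorphism, which may be checked on stalks. Over a classical point $x_{0}\in X(k)\setminus B$ this is elementary: $\varphi$ is \'etale and its $d$ preimages are $k$-points, so $\varphi$ is a local isomorphism at each, and after shrinking $U\ni x_{0}$ one gets $\varphi^{-1}(U)=\coprod_{i=1}^{d}U_{i}$ with each $\varphi|_{U_{i}}$ an isomorphism. Then $(\cF,\nabla_{\cF})|_{U}\cong(\cO_{U},d_{X/k})^{\oplus d}$ is the trivial connection, its horizontal functionals form the $k$-space $k^{d}$, and the $d$ sections are precisely the coordinate projections, hence a basis. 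Their $k$-linear independence, at every point whether classical or not, is in any case the independence of distinct characters (Dedekind), applied in the fibre $\cF\otimes\kappa(x)$.

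The remaining point, and the one I expect to be the true obstacle, is surjectivity at the non-classical points of $X\setminus B$, where $\varphi$ need not split: a connected \'etale cover of an annulus admits no local section along the skeleton, yet its trace is a nonzero solution. Here I would not argue pointwise but identify both sheaves globally. The trace form of the finite \'etale $\varphi$ gives a horizontal self-duality $(\cF,\nabla_{\cF})\cong(\cF,\nabla_{\cF})^{\vee}$, since the trace commutes with $d$ and the pairing is perfect because $\varphi$ is \'etale; hence ${\cS}ol(\cF,\nabla_{\cF})\cong\cF^{\nabla}=\varphi_{\ast}(\ker d_{Y/k})=\varphi_{\ast}\underline{k}_{Y\setminus Z}$, using $\ker d_{Y/k}=\underline{k}_{Y\setminus Z}$ on the smooth curve $Y\setminus Z$. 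On the other hand, the free $k$-vector space sheaf on the sheaf of sections of the finite map $\varphi$ is canonically $\varphi_{\ast}\underline{k}_{Y\setminus Z}$ as well, the germ of a section going to the indicator of its component. These two identifications are compatible with (\ref{sectionform}), so $\underline{k}\langle{\cS}ect(Y/X)\rangle\to{\cS}ol(\cF,\nabla_{\cF})$ is an isomorphism; this recovers the basis statement at classical points and gives the sheaf-theoretic form, that ${\cS}ol(\cF,\nabla_{\cF})$ is freely generated by ${\cS}ect(Y/X)$, at every point of $X\setminus B$.
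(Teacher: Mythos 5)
Your proof of the inclusion (\ref{sectionform}) and your treatment of classical points agree in substance with the paper's: the paper derives horizontality of an algebra homomorphism $\varphi_{\ast}\cO_{Y}|_{U}\to\cO_{U}$ from the functoriality of $\varphi\mapsto(\varphi_{\ast}\cO_{Y},\varphi_{\ast}(d_{Y/k}),\mu_{Y})$ and the fact that $\varphi$ is recovered from the algebra structure alone, which is the same computation you write as $s^{\ast}\circ d_{Y/k}=d_{X/k}\circ s^{\ast}$; and at $x_{0}\in X(k)\setminus B$ the paper passes to the formal completion $k[[t]]$, where $\what{\cF}$ is a product of $d$ copies of $\cO_{\what{X}}$ with orthogonal idempotents $e_{i}$ and the algebra maps are the dual basis $e_{i}^{\ast}$ --- your local splitting $\varphi^{-1}(U)=\coprod_{i}U_{i}$ in formal clothing. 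Up to that point the two arguments are essentially identical.

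Your last paragraph, however, contains a genuine error, located exactly at the difficulty you yourself flagged. The trace-form identification ${\cS}ol(\cF,\nabla_{\cF})\cong\varphi_{\ast}\underline{k}$ is correct, but the second claim --- that the free $k$-vector-space sheaf on ${\cS}ect(Y/X)$ is also canonically $\varphi_{\ast}\underline{k}$ --- fails at precisely the non-classical points where $\varphi$ does not split. Sheafification does not change stalks, so the stalk of $\underline{k}\langle{\cS}ect(Y/X)\rangle$ at $x$ is the free $k$-space on the set of germs of sections at $x$, whereas $(\varphi_{\ast}\underline{k})_{x}\cong k^{\#\varphi^{-1}(x)}$. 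In your own example (a connected degree-$d$ \'etale cover of an annulus, or $z\mapsto z^{d}$ at the Gauss point of $\Pan$) the former is $0$, since every small connected neighborhood has connected preimage, while the latter is $k$, the trace being a nonzero horizontal germ that is not a $k$-combination of germs of sections. So the map you assert to be an isomorphism is not surjective on stalks, and your observation is in fact an obstruction to the stalkwise statement at such $x$; no rearrangement of the duality argument can close this. What the paper actually establishes, and all that it later uses, is the statement at $k$-rational points, together with the remark that a section or solution on a connected affinoid is determined by its germ at a classical point (a uniqueness, not an existence, statement); the lemma is only invoked afterwards at classical points $x_{0}\in X(k)\setminus B$. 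You should therefore keep your first two paragraphs and replace the third by the restriction to $x\in X(k)\setminus B$, rather than claim the stalkwise isomorphism everywhere.
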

\begin{proof} We  observe that 
the construction 
\beq
(\varphi : Y \to X) \;\;\; \longmapsto \;\;\; (\varphi_{\ast} \cO_{Y}, \varphi_{\ast}(d_{Y/k}), \mu_{Y}) \;, 
\eeq
from finite coverings to finite locally free $\cO_{X}$-algebras  with a connection and horizontal  multiplication map,  is  functorial \cite[App. E]{AB}. But $\varphi : Y \to X$ is determined by the $\cO_{X}$-algebra $(\varphi_{\ast} \cO_{Y}, \mu_{Y})$ alone.  As a consequence, for any affinoid domain $U \subset X$, any $\cO_{U}$-algebra homomorphism ${\varphi}_{\ast} (\cO_{Y})_{|U} \to \cO_{U}$, is automatically horizontal, hence a solution of $(\cF,\nabla_{\cF})_{|U}$. This proves the first part of the lemma.
\par
 As for the second part of the statement, since two sections of $\varphi_{\ast} \cO_{Y}$ on a connected affinoid domain $U$ coincide as soon as they coincide in the neighborhood of a $k$-rational point of $U$, it will suffice to treat the case of $x \in X(k)$. So, for any point $x_{0} \in X(k) \setminus B$, we consider the completion $\what{\cO}$ of the local ring $\cO_{X,x_{0}}$ and its formal spectrum $\what{X} = \Spf \what{\cO}$;  it is a formal power series ring of the form $k[[t]]$, where $t$ is a local parameter at $x_{0}$, which we may assume to extend to a section of $\cO_{X}$.
 We informally denote by $W \mapsto \what{W}$ the  base-change functor by $\what{X} \to X$ on objects $W$ defined over $X$. It will be enough to prove the statement for the map $\what{\varphi}: \what{Y} \to \what{X}$, at any $x_{0} \in X(k) \setminus B$.
\par Notice that  
$${\cS}ect(Y/X)\,\,\what{} 
 =  {\sH}om_{\cO_{X,x_{0}}-{\rm alg}} (({\varphi}_{\ast} \cO_{Y})_{x_{0}} , \cO_{\what{X}}) =
 {\sH}om_{\cO_{\what{X}}-{\rm alg}} (\what{{\varphi}_{\ast} \cO_{Y}} , \cO_{\what{X}})
\; ,$$ 
is  the set  of formal sections of $\varphi$ at $x_{0}$. 
The $\cO_{\what{X}} $-algebra $\what{\cF} = \what{{\varphi}_{\ast} \cO_{Y}}$ is a direct sum 
$$
 \what{\cF} = \bigoplus_{i=1}^{d} \cO_{\what{X}} \, e_{i}  \; ,
$$
where  
the $e_{i}$'s are orthogonal idempotents. 
An algebra homomorphism 
$\sigma: \what{{\varphi}_{\ast} \cO_{Y}} \to \cO_{\what{X}}$, is forced to map one of the $e_{i}$'s to 1, and the others to 0~: 
let us denote it by $e_{i}^{\ast}$. As we saw before, the $e_{i}^{\ast}$, for $i=1, \dots,d$ are horizontal. Since they freely span the  $\cO_{\what{X}}$-module
$ {\sH}om_{\cO_{\what{X}}} (\what{{\varphi}_{\ast} \cO_{Y}} , \cO_{\what{X}})$ of rank $d$,  they form a $k$-basis of $ {\sH}om_{\cO_{\what{X}}} (\what{{\varphi}_{\ast} \cO_{Y}} , \cO_{\what{X}})^{\nabla}= {\cS}ol(\cF,\nabla_{\cF})_{x}$.
This proves the statement. 
\end{proof}

\begin{parag}
 We also consider 
the fiber product $Y \times_{X}Y$ and its two projections $\pr_{1} , \pr_{2} : Y \times_{X}Y  \to Y$. 
\end{parag}
We have, as before:
\begin{cor}
 \label{pbsections} The inverse image sheaf $\varphi^{-1}{\cS}ect(Y/X)$ coincides with the sheaf ${\cS}ect(\pr_{1})$ of sections of $\pr_{1}: Y \times_{X}Y \to Y$.
 We have an inclusion of sheaves of sets 
 \beq
 \label{sectionform2}
\varphi^{-1}{\cS}ect(Y/X)   \subset   {\cS}ol(\cE,\nabla_{\cE}) \; .
 \eeq
The sheaf of $k$-vector spaces ${\cS}ol(\cE,\nabla_{\cE})_{|Y \setminus Z}$ is  freely generated by its subsheaf  $\varphi^{-1}{\cS}ect(Y/X)_{|Y \setminus Z}$.
\end{cor}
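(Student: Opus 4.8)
The plan is to reduce the whole corollary to one further application of lemma \ref{sections}, this time to the first projection $\pr_{1}: Y\times_{X}Y \to Y$, which shares all the formal features of $\varphi$. The mechanism is finite flat base change. Since $\varphi$ is finite flat by \ref{notation}, forming the cartesian square with projections $\pr_{1},\pr_{2}$ and base changing along $\varphi$ itself produces a canonical isomorphism of $\cO_{Y}$-algebras
$$\varphi^{\ast}\varphi_{\ast}\cO_{Y} \iso (\pr_{1})_{\ast}\pr_{2}^{\ast}\cO_{Y} = (\pr_{1})_{\ast}\cO_{Y\times_{X}Y} \; .$$
First I would promote this to an isomorphism in ${\bf MIC}(Y(\ast Z)/k)$: by the functoriality and base-change compatibility of the assignment $\psi \mapsto (\psi_{\ast}\cO, \psi_{\ast}d)$ recalled in \cite[App. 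E]{AB}, the connection $(\pr_{1})_{\ast}d_{(Y\times_{X}Y)/k}$ corresponds, under the displayed isomorphism, to $\varphi^{\ast}\nabla_{\cF} = \nabla_{\cE}$. Thus $(\cE,\nabla_{\cE})$ is identified, compatibly with the algebra structure, with the object attached to $\pr_{1}$.

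Granting this identification, I would run the local argument of lemma \ref{sections} for $\pr_{1}$. Over $Y\setminus Z$ the map $\varphi$ is an \'etale covering of degree $d$, hence so is its base change $\pr_{1}$; in particular $(Y\times_{X}Y)_{|Y\setminus Z}$ is again rig-smooth and $\pr_{1}$ splits into orthogonal idempotents over the completed local ring at each $k$-point of $Y\setminus Z$. The lemma then yields both the inclusion ${\cS}ect(\pr_{1}) \subset {\cS}ol((\pr_{1})_{\ast}\cO_{Y\times_{X}Y},(\pr_{1})_{\ast}d_{(Y\times_{X}Y)/k})$ and the fact that the stalk of ${\cS}ect(\pr_{1})$ is a $k$-basis of the corresponding solution stalk at each point of $Y\setminus Z$. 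Transporting these through the isomorphism of the first paragraph gives exactly the claimed inclusion $\varphi^{-1}{\cS}ect(Y/X) \subset {\cS}ol(\cE,\nabla_{\cE})$ and the free generation of ${\cS}ol(\cE,\nabla_{\cE})_{|Y\setminus Z}$ — once the identity ${\cS}ect(\pr_{1}) = \varphi^{-1}{\cS}ect(Y/X)$ is secured.

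That last identity is the only genuinely new point. By the universal property of the fiber product, a section of $\pr_{1}$ over an open $W \subseteq Y$ is the same as a morphism $g: W \to Y$ with $\varphi\circ g = \varphi_{|W}$; sending the germ of a section $h$ of $\varphi$ over an open of $X$ containing $\varphi(W)$ to $g = h\circ\varphi_{|W}$ defines a natural map $\varphi^{-1}{\cS}ect(Y/X) \to {\cS}ect(\pr_{1})$. Over $Y\setminus Z$ I would check it is an isomorphism locally: on a connected $W$ carried isomorphically by $\varphi$ onto a trivializing open $U \subseteq X\setminus B$ over which $\varphi^{-1}U$ is a disjoint union of $d$ sheets, both sides are canonically the constant sheaf on those $d$ sheets, and the map is the identity. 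I expect the main obstacle to be the horizontality in the first paragraph — the bare algebra isomorphism is routine finite flat base change, but one must verify it intertwines $(\pr_{1})_{\ast}d_{(Y\times_{X}Y)/k}$ with $\varphi^{\ast}\nabla_{\cF}$ — together with the discipline of staying over $Y\setminus Z$: along the ramification the fiber product $Y\times_{X}Y$ acquires singularities where the diagonal meets the remaining components, $\pr_{1}$ ceases to be \'etale, and the tidy comparison of section and solution sheaves, hence lemma \ref{sections} itself, is valid only there.
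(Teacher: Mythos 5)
Your route is the one the paper intends: it gives no separate proof of this corollary, prefacing it only with ``We have, as before'', i.e.\ use finite flat base change to identify $(\cE,\nabla_{\cE})$, together with its algebra structure, with the direct-image data of $\pr_{1}$, and rerun Lemma \ref{sections} for $\pr_{1}$, which over $Y\setminus Z$ is again an \'etale covering of degree $d$. Your three steps (the horizontal base-change isomorphism, the identification of section sheaves, the transport of the lemma) are exactly that, and your insistence on staying over $Y\setminus Z$ is correct.

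One local verification, as written, would fail. To prove $\varphi^{-1}{\cS}ect(Y/X)={\cS}ect(\pr_{1})$ you work on ``a connected $W$ carried isomorphically by $\varphi$ onto a trivializing open $U$''. Such $W$ form a neighborhood basis only at $k$-rational points of $Y\setminus Z$: at points of higher Berkovich type an \'etale map need not be a local isomorphism --- this is precisely the phenomenon ($\cR_{\varphi}$ strictly larger than ${\rm Crit}(\varphi)$) the whole paper is about, so you cannot trivialize $\varphi$ near such a point. The repair is the same device used in the proof of Lemma \ref{sections}: both sheaves embed in ${\sH}om_{\cO_{Y}}(\cE,\cO_{Y})$, and two algebra homomorphisms of a finite locally free $\cO$-algebra over a connected affinoid coincide as soon as they coincide near one $k$-rational point; hence the comparison reduces to classical points, where your argument (or a direct stalk computation using flatness of $\varphi$) goes through. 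Note finally that the coincidence $\varphi^{-1}{\cS}ect(Y/X)={\cS}ect(\pr_{1})$ genuinely fails over $Z$ --- for $\varphi\colon T\mapsto T^{2}$ the diagonal and antidiagonal give two germs of sections of $\pr_{1}$ at the origin while $\varphi$ has no local section there --- so your restriction is not merely prudent but necessary; only the inclusion (\ref{sectionform2}) extends across $Z$, by the identity principle applied to the horizontality condition on the dense open $Y\setminus Z$.
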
 

 \begin{rmk} 
 The $k$-vector space of $k$-analytic solutions of 
 $(\cE,\nabla_{\cE})$ at any point $z_{0} \in Y(k) \setminus Z$ is spanned by the germs of analytic solutions $w(z)$ at $z = z_{0}$ of the algebraic equation $\varphi (w) = \varphi (z)$. Notice that if $\varphi : \Pan \to \Pan$ is a rational function, the algebraic equation for $w$ as a function of $z$, $\varphi (z+w) = \varphi (z)$ coincides with the equation $w \cdot A_{\varphi}(z,w)=0$ studied by Faber in section 2 of  \cite{Xa2}.
  \end{rmk}

 \begin{parag} 
Let $D$ be a strict open disk in $Y \setminus Z(\varphi)$. Our problem (\ref{RolleANcor})  consists in the determination of the maximal open disk 
 $D_{y_{0}} \subset D$, centered at $y_{0}\in Y(k) \cap D$, such that 
  the map $\varphi$ restricts to an isomorphism 
 \beq \label{lociso1}
 D_{y_{0}} \iso D^{\p}_{ \varphi(y_{0})} \; ,
 \eeq
where $D^{\p}_{ \varphi(y_{0})} $ denotes an open disk with $ \varphi(y_{0}) \in D^{\p}_{ \varphi(y_{0})}  \subset X$. 
 If we set $x_{0} = \varphi(y_{0}) \in X(k) \setminus B$, this problem coincides with the problem of determining the maximal open disk $D^{\p}_{x_{0}}$, centered at $x_{0}$, such that the unique local  section $\sigma$ of $\varphi$ at $x_{0}$ such that $\sigma(x_{0}) = y_{0}$
  converges on  $D^{\p}_{x_{0}}$. By lemma \ref{sections},  $\sigma$ is a
  local solution of $(\cF,\nabla_{\cF})$ at $x_{0}$. 
 Notice that we will then need  to express the result not in terms of  $D^{\p}_{x_{0}}$, but in terms of  
 the height  
 of the annulus  $D \setminus  D_{y_{0}}$ in (\ref{lociso1}), where $ D_{y_{0}} = \sigma(D^{\p}_{x_{0}})$. The statement we want to prove says that
 $$h(D \setminus D_{y_{0}}) \geq p^{-{\frac{1}{p-1}}} \; .$$
 This statement follows if we can prove that the common radius of convergence of $(\cE,\nabla_{\cE})$ at $y_{0}$, expressed in terms of a normalized coordinate on $D$, is $\geq p^{-{\frac{1}{p-1}}}$. 
 \par 
 We will prove that this is the case if $Y$ and $X$ have good reduction. 
 Obviously, in this discussion $Y$ may be replaced by any  strictly affinoid domain with good reduction $C \subset Y$, and $X$ by the image $\varphi(C) \subset X$, provided $D \subset C$ and $\varphi$ induces a finite morphism $C \to \varphi (C)$. 
Moreover, from the discussion of remarks \ref{compactim} and \ref{noncompactim}  it follows that we may assume  that $E:= \varphi(D)$ is a strict open disk in $X$. We will denote by $\zeta$ (resp. $\xi$) the boundary point of $D$ in $Y$ (resp. of $E$ in $X$).
\end{parag}
\section{Basic results on $p$-adic differential systems} \label{basics}
\medskip
In this section $Y$ is any rig-smooth strictly $k$-analytic curve. 
\begin{parag} \label{anelem} The classical theory of $p$-adic linear differential equations is developed on an open disk or an open annulus, embedded as open analytic domains in $\Pan$. Moreover, it is usually understood that their boundary points in $\Pan$ be points of Berkovich type 2. This precision becomes relevant 
when one insists that the coefficients of the equation represent germs of analytic functions at those boundary points.   
One classically defines, for $r \in (0,1) \cap |k|$,
the $k$-Banach algebra $\sH(r,1)$ of  
 \emph{analytic elements} \cite[IV.4]{DGS} on the open annulus  
 \beq 
 \label{annulus} C(0;r,1) := \{x \in D(0,1^{-}) \, |\, r <|T(x)| <1 \, \} \;  .
 \eeq
 It is the completion of the $k$-algebra of rational functions of $T$, with no poles within $C(0;r,1)$, equipped with the sup-norm $||\;||$ on $C(0;r,1)$. While $\sH(r,1) \subset \cB(r,1)$, the Banach $k$-algebra of bounded analytic functions on $C(r,1)$, the two do not coincide, and the properties of 
a first order
 system of linear differential equations 
 \beq \label{diffsyst}
\Sigma  :    \frac{d \, Y}{d\,T} = G\,Y \; ,   
 \eeq
 where $G$ is a $n \times n$ matrix with coefficients in $\sH(r,1)$ are more special than in the case of coefficients in $\cB(r,1)$. 
\end{parag}
\begin{parag} \label{aneldef}
Let $C \subset Y$ be any open analytic domain, with only a finite set $\zeta_{1}, \dots, \zeta_{s}$ of boundary points all of type 2 in $Y$. We define the Banach $k$-algebra $\sH_{Y}(C)$ of  \emph{$Y$-analytic elements} on $C$ as the completion of the $k$-algebra 
$$\cO_{Y}(C) \cap \bigcap_{i=1}^{s}\cO_{Y,\zeta_{i}} \; ,$$
under the sup-norm $||\;||_{C}$ on $C$.   In the present discussion, we consider an open disk $D \subset Y$, with boundary point $\zeta$ of Berkovich type 2. We define  a \emph{formal coordinate} $T$ on $D$ in $Y$, to be a formal \'etale coordinate on (the smooth $\kc$-formal model of) an affinoid domain $A \subset Y$, with good canonical reduction and maximal point $\zeta$, which extends to an isomorphism $T:D \iso D(0,1^{-})$.  A formal coordinate $T$ on $D$ in $Y$ is \emph{overconvergent} if it extends as a section of $\cO_{Y}$ on a neighborhood of $\zeta$ in $Y$. 
 \begin{parag}
As a matter of notation, we recall that, for any   $\kc$-formal scheme $\fX$, locally of finite presentation, with generic fiber the $k$-analytic space $X = \fX_{\eta}$, there is a canonical \emph{specialization map}
\beq
\sp_{\fX}: X = \fX_{\eta}  \to {\fX}_{s} \; ,
\eeq
which may be viewed as a morphism of $G$-ringed spaces 
\beq
\sp_{\fX}: X  = X_{G} \to \fX\; , 
\eeq
where the subscript $(-)_G$ refers to the $G$-topology of \cite[1.3]{BerkovichEtale}. 
\end{parag} 

We now apply comma (3) of proposition 2.2.1 of \cite{BerkInt} to our special case. Notice, contrarily to what is there stated, that the commas (2) and (3) of \lc \, refer to points of type 3 and 2, respectively.  
\begin{lemma} \label{projneigh1} 
Let $Y$ be any rig-smooth strictly $k$-analytic curve and $\zeta$ be a point of type 2 of $Y$.   
Let $C_{\zeta}$ be the union of all open disks  in $Y$ with boundary point $\zeta$. 
Then $C_{\zeta}$ is an affinoid domain in $Y$, with good canonical reduction and maximal point $\zeta$.
Let $\sY$ be a  connected smooth projective curve over $\kc$, such that $C_{\zeta}$ is isomorphic to the complement of a finite number of residue classes in $\sY_{\eta}^{\an}$. Then the embedding $C_{\zeta} \to \sY_{\eta}^{\an}$ extends to an isomorphism 
\beq
U  \iso \sY^{\an}_{\eta} \setminus (\bigcup_{j=1}^{N} ]c_{j}[_{\sY} \cup \bigcup_{i=1}^{n}Y_{i} )\; ,
\label{coord}
\eeq
of an open neighborhood $U$ of $C_{\zeta}$ in $Y$,  where $\zeta$ is sent  to the generic point of $\sY_{s}$ in 
$\sY^{\an}_{\eta}$, $\{c_{1}, \dots, c_{N}\}$ is a well-defined set of  $\kt$-rational points of $\sY_{s}$, for $j=1,\dots,N$  (empty if and only if $\zeta$ is an interior point of $Y$),  $]c_{j}[_{\sY}$ is the residue class of $\sY^{\an}_{\eta}$ corresponding to $c_{j}$, and each $Y_{i} \subset E_{i}$ is  
an affinoid domain isomorphic to the standard closed disk $D(0,1^{+})$ in a residue class $E_{i}$ of $\sY^{\an}_{\eta}$, where $E_{i} \not= E_{j}$, for $i \not= j$.  The isomorphism (\ref{coord}) takes $C_{\zeta}$  to the complement of $\bigcup_{j=1}^{N} ]c_{j}[_{\sY} \cup \bigcup_{i=1}^{n}E_{i} $ in $\sY^{\an}_{\eta}$. 
\end{lemma}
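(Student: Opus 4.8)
The plan is to apply comma (3) of proposition 2.2.1 of \cite{BerkInt} directly to the type-2 point $\zeta$, and then to read off the description of $C_{\zeta}$ from the resulting model. That proposition (which, as observed in \lc, governs points of type 2 despite its numbering) supplies in one stroke a connected smooth projective curve $\sY$ over $\kc$, an open neighborhood $U$ of $\zeta$ in $Y$, and an isomorphism of the shape (\ref{coord}) sending $\zeta$ to the generic point of $\sY_{s}$, in which the deleted data consist of finitely many residue classes $\bigcup_{j=1}^{N} ]c_{j}[_{\sY}$ and finitely many closed disks $Y_{i} \cong D(0,1^{+})$ lying in distinct residue classes $E_{i}$. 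The set $\{c_{j}\}$ records exactly the directions in which $Y$ terminates, so it is empty precisely when $\zeta$ is an interior point of $Y$; this is the asserted dichotomy for $N$. Thus the extension statement (\ref{coord}) and the finiteness of all the data are provided outright by the proposition, and the remaining work is purely the identification of $C_{\zeta}$ inside this picture.

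For that identification I would first record the local structure of $\sY^{\an}_{\eta}$ at the generic point. Since $k$ is algebraically closed, so is $\kt$; hence every closed point $c$ of $\sY_{s}$ is $\kt$-rational and its residue class $]c[_{\sY}$ is an open disk whose unique boundary point is $\zeta$. Conversely, the residue classes are precisely the connected components of $\sY^{\an}_{\eta}\setminus\{\zeta\}$, so any open disk with boundary $\zeta$ lies in a single residue class and, being maximal in its direction, must be the whole class $]c[_{\sY}$ (a proper open subdisk would have its boundary strictly deeper in the tree). Transporting this through (\ref{coord}), the open disks of $Y$ with boundary $\zeta$ are exactly those residue classes $]c[_{\sY}$ that lie entirely in $U\subset Y$.

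It then remains to classify the three possible behaviours of a direction $c$. If $c\notin\{c_{j}\}$ and $]c[_{\sY}$ is none of the $E_{i}$, the whole residue class sits inside $U$ and contributes an open disk to $C_{\zeta}$. If $c=c_{j}$, then $\zeta$ is a boundary point of $Y$ in that direction and no open disk with boundary $\zeta$ lies in $Y$ there. If $]c[_{\sY}=E_{i}$, then $Y\cap E_{i}=E_{i}\setminus Y_{i}$ is a semi-open annulus rather than an open disk with boundary $\zeta$, so this direction again contributes nothing to $C_{\zeta}$ even though $U$ retains its collar. Hence $C_{\zeta}$ corresponds under (\ref{coord}) to $\sY^{\an}_{\eta}\setminus(\bigcup_{j=1}^{N}]c_{j}[_{\sY}\cup\bigcup_{i=1}^{n}E_{i})$, exactly as claimed, with $C_{\zeta}\subset U$ because $Y_{i}\subset E_{i}$; the inclusion of the generic point accounts for $\zeta\in C_{\zeta}$ as its maximal point.

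Finally, being the complement of finitely many residue classes in the analytification of a smooth projective $\kc$-curve with good reduction, $C_{\zeta}$ is a strictly affinoid domain whose canonical reduction is the smooth affine curve gotten by deleting the corresponding closed points from $\sY_{s}$; thus $C_{\zeta}$ has good canonical reduction, and its Shilov boundary is the single point $\zeta$. The main obstacle here is conceptual rather than computational: one must carefully separate the \emph{collar} directions $E_{i}$ — where $Y$ reaches beyond $\zeta$ only through an annulus — from the genuine disk directions and from the boundary directions $c_{j}$, and then check that the finiteness of all these data is precisely what proposition 2.2.1 of \cite{BerkInt} guarantees.
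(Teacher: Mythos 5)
Your overall strategy coincides with the paper's: the proof given there is a one-line appeal to comma (3) of Proposition 2.2.1 of \cite{BerkInt}, exactly the result you invoke, so the extension statement (\ref{coord}) and the finiteness of the data are obtained in the same way. The genuinely useful part of your write-up is the explicit identification of $C_{\zeta}$ inside the model, which the paper leaves to the reader; most of it is sound (the residue classes as the connected components of $\sY^{\an}_{\eta}\setminus\{\zeta\}$, the maximality of a residue class among open disks with boundary point $\zeta$, the description of the canonical reduction of $C_{\zeta}$).

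There is, however, a genuine gap in the step where you dispose of the collar directions. You write that if $]c[_{\sY}=E_{i}$ then ``$Y\cap E_{i}=E_{i}\setminus Y_{i}$ is a semi-open annulus, so this direction contributes nothing to $C_{\zeta}$''. This conflates the part of $U$ lying in the direction of $E_{i}$ with the part of $Y$ lying in that direction: the latter is the whole connected component $W_{i}$ of $Y\setminus\{\zeta\}$ containing $E_{i}\setminus Y_{i}$, which in general strictly contains $E_{i}\setminus Y_{i}$ and is not visible in the chosen model. Nothing in the bare statement you invoke prevents $W_{i}$ from being itself an open disk with boundary point $\zeta$, in which case $W_{i}\subset C_{\zeta}$ and your formula for $C_{\zeta}$ fails. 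Concretely, take $Y=D(0,1^{+})$ with $\zeta$ its maximal point and $U=\Pan\setminus\l(\,]\infty[\,\cup\,D(0,\rho^{+})\r)$ for some $\rho\in|k^{\times}|$, $\rho<1$: this is a neighborhood of $\zeta$ of the prescribed shape with $N=n=1$, yet the residue class $E_{1}=D(0,1^{-})$ is an open disk of $Y$ with boundary point $\zeta$ and must belong to $C_{\zeta}$. To repair the argument you must either choose $U$ maximal among the neighborhoods furnished by the fundamental system --- equivalently, declare a direction to be an $E_{i}$-direction only when the component $W_{i}$ of $Y\setminus\{\zeta\}$ in that direction is \emph{not} an open disk, and check that the remaining directions can be absorbed into the model by regluing the disks $W_{i}$ in place of the $E_{i}$ --- or first establish directly that $C_{\zeta}$ is affinoid with good canonical reduction and construct $\sY$ from that reduction, as the order of the assertions in the lemma suggests.
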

\begin{proof}
The statement easily follows from \lc.
\end{proof}
\begin{cor} 
\label{overcoord} If the strict open disks $D_{1}, \dots,D_{r} \subset Y$  have  the same boundary point $\zeta$ in $Y$, a simultaneous formal overconvergent coordinate on $D_{1}, \dots,D_{r}$ in $Y$ exists. 
\end{cor}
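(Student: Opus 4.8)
The plan is to reduce the statement to Lemma~\ref{projneigh1} and to a Riemann--Roch construction on the special fibre of the projective curve $\sY$ furnished there. Since each $D_{i}$ is a \emph{strict} open disk with boundary point $\zeta$, and $\zeta$ is of type $2$, the $D_{i}$ are residue classes of $C_{\zeta}$. Applying Lemma~\ref{projneigh1} to $\zeta$, I would fix an isomorphism of an open neighborhood $U$ of $C_{\zeta}$ with $\sY^{\an}_{\eta} \setminus (\bigcup_{j} ]c_{j}[_{\sY} \cup \bigcup_{i} Y_{i})$, under which $\zeta$ goes to the generic point of $\sY_{s}$ and each $D_{i}$ is identified with the residue class $]\bar{y}_{i}[_{\sY}$ of a distinct $\kt$-rational point $\bar{y}_{i}$ of the smooth locus of $\sY_{s}$, none of which lies among the finitely many removed points $c_{j}$ or disks $E_{i}$.

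The construction then amounts to producing a single rational function whose reduction is a uniformizer at each $\bar{y}_{i}$ at once; note that a constant shift can fix at most one center, so the disks genuinely force the zeros to be imposed simultaneously. Concretely, I would fix an auxiliary closed point $\bar{p}_{0}$ of $\sY_{s}$ distinct from all $\bar{y}_{i}$ and, for $N \gg 0$, invoke Riemann--Roch on the projective curve $\sY_{s}$ to see that $L(N\bar{p}_{0} - \sum_{i}\bar{y}_{i})$ is nonempty of large dimension. A generic member $\bar{T}$ vanishes to order exactly $1$ at each $\bar{y}_{i}$, since vanishing to higher order at a fixed $\bar{y}_{i}$ cuts out a proper linear subspace, and $\kt$ is infinite; such a $\bar{T}$ is thus a uniformizer, hence \'etale, at each $\bar{y}_{i}$, with all its poles at $\bar{p}_{0}$. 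Lifting $\bar{p}_{0}$ and the $\bar{y}_{i}$ to $\kc$-sections of $\sY$ and applying relative Riemann--Roch, I would lift $\bar{T}$ to a rational function $T$ on $\sY_{\eta}$ with integral coefficients whose reduction is $\bar{T}$.

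To finish, set $A := C_{\zeta}$ minus the finitely many residue classes of the poles of $\bar{T}$ and of the zeros of $d\bar{T}$ on $\sY_{s}$. Since none of these classes is one of the $D_{i} = ]\bar{y}_{i}[_{\sY}$, the affinoid $A$ still has good canonical reduction and maximal point $\zeta$, and $\bar{T}$ restricts to an \'etale map on $\bar{A}$, so that $T$ is a formal \'etale coordinate on the smooth $\kc$-formal model of $A$. Because $\bar{T}$ is a uniformizer vanishing at $\bar{y}_{i}$, the function $T$ restricts on each $D_{i}$ to an isomorphism $T : D_{i} \iso D(0,1^{-})$; and $T$ is a section of $\cO_{Y}$ on $U$ away from its finitely many poles, an open neighborhood of $\zeta$, hence overconvergent. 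Thus $T$ is a simultaneous formal overconvergent coordinate on $D_{1}, \dots, D_{r}$.

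I expect the one genuinely delicate point to be the simultaneous simple-zero condition --- exhibiting a single function that is a uniformizer at all $r$ prescribed points at once --- together with the passage from $\kt$ back to $\kc$; both are handled by the Riemann--Roch and genericity argument above. By contrast, the identification of the disks with residue classes and the verification of overconvergence are formal consequences of Lemma~\ref{projneigh1} and of the definitions of formal and overconvergent coordinate.
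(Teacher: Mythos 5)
Your proposal is correct and follows what the paper intends: the corollary is stated without an explicit proof as a consequence of Lemma \ref{projneigh1}, and your argument --- identifying the $D_{i}$ with residue classes $]\bar{y}_{i}[_{\sY}$ and using Riemann--Roch on $\sY_{s}$ (plus genericity over the infinite field $\kt$ and lifting along the smooth model $\sY$) to produce a rational function with simple zeros at all the $\bar{y}_{i}$ and poles confined to a single residue class --- is precisely the intended way to extract a simultaneous overconvergent coordinate from that lemma. The details you flag as delicate (the simultaneous simple-zero condition and the passage from $\kt$ to $\kc$) are handled correctly.
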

We will also need the following statement.
\begin{prop} \label{locfree} Notation as in lemma \ref{projneigh1}. Let $\cM$ be a locally free $\cO_{Y}$-module of rank $m$, and let $D_{1}, \dots,D_{r} \subset Y$ be strict open disks with the same boundary point $\zeta$ in $Y$. Then, there exists a neighborhood $U$ of $\zeta$ as produced in lemma \ref{projneigh1} , such that $\cM_{|U}$ is free. 
\end{prop}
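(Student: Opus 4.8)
The plan is to descend to the affinoid core $C_{\zeta}$ of lemma \ref{projneigh1} and to exploit that, for a connected rig-smooth curve, its coordinate ring is a Dedekind domain; the sole obstruction to freeness of $\cM$ is then the Steinitz class of a single line bundle, which I can kill by cutting away finitely many residue classes arranged to be disjoint from $D_{1},\dots,D_{r}$. Concretely, I would set $\cA := \cO_{Y}(C_{\zeta})$ and $M := \Gamma(C_{\zeta},\cM)$. Since $Y$ is rig-smooth near $\zeta$ and $C_{\zeta}$ is connected (all its open disks share the point $\zeta$), $C_{\zeta}$ is an irreducible smooth affinoid curve; hence $\cA$ is a noetherian regular domain of Krull dimension $1$, \ie a Dedekind domain, and $M$ is a finite projective $\cA$-module of rank $m$.

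Next I would invoke the Steinitz structure theorem for projective modules over a Dedekind domain: $M \cong \cA^{m-1}\oplus I$ for some fractional ideal $I$, whose class in $\mathrm{Pic}(\cA)=\mathrm{Pic}(C_{\zeta})$ is the single invariant measuring the failure of freeness. It therefore suffices to trivialize the line bundle attached to $I$. Writing $I=\cO_{C_{\zeta}}(-\Delta)$ for an effective divisor $\Delta=\sum_{a}y_{a}$ supported on the finitely many classical points $y_{a}\in C_{\zeta}(k)$ in the support of $\cA/I$, the restriction of $I$, hence of $M$, to $C_{\zeta}\setminus\mathrm{supp}(\Delta)$ is free of rank $m$, because a fractional ideal of a Dedekind domain is locally principal and equals the unit ideal away from its support.

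Then I would translate this freeness into the geometry of \eqref{coord}. Each $y_{a}$ lies in a well-defined residue class of $\sY^{\an}_{\eta}$, \ie specializes to a closed point of $\sY_{s}$; removing from $C_{\zeta}$ a small closed disk $D(y_{a},\rho^{+})$ around each $y_{a}$ (equivalently enlarging the excised disks $Y_{i}$ of \eqref{coord}, or cutting a further residue class) produces, by lemma \ref{projneigh1}, a neighborhood $U$ of $\zeta$ of the required form \eqref{coord}, with $\zeta$ still interior, on which $\cM_{|U}$ is free. To guarantee $D_{1},\dots,D_{r}\subset U$, I would first move $\Delta$ inside its linear equivalence class so that $\mathrm{supp}(\Delta)$ meets none of the residue classes containing $D_{1},\dots,D_{r}$.

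The hard part will be exactly this compatibility: only closed sub-disks — never whole residue classes — may be excised, lest $\zeta$ cease to be an interior point, yet the excision must both annihilate the Steinitz class and spare $D_{1},\dots,D_{r}$. I would reconcile these constraints by transporting the Picard obstruction through the good-reduction dictionary, under which the residue classes of $C_{\zeta}$ correspond to the closed points of the smooth affine curve $\widetilde{C_{\zeta}}\subset\sY_{s}$ over the infinite field $\kt$, and by representing its class by a divisor on $\sY_{s}$ whose support avoids the finitely many points below $D_{1},\dots,D_{r}$; only then would I cut the matching residue classes of $C_{\zeta}$ to obtain the desired $U$.
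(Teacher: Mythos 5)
Your route is genuinely different from the paper's. The paper never descends to the Picard group of $C_{\zeta}$: it picks an affinoid neighbourhood $V$ of $\zeta$ on which $\cM$ is already free, glues $\cM_{|V}$ with $\cM_{|D_{i}}$ on the disks and with free extensions across the remaining residue classes to obtain a locally free sheaf on all of $\sY^{\an}_{\eta}$, algebraizes by GAGA, and restricts to an affine open. You instead work on the affinoid core itself, and the first half of your argument is sound: $\cA=\cO_{Y}(C_{\zeta})$ is indeed a Dedekind domain, $M\cong\cA^{m-1}\oplus I$ by Steinitz, and the restriction of $I$ to the complement of small closed disks around $V(I)$ is the unit ideal, hence free.

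The step you yourself call ``the hard part'' is, however, a genuine gap, and it is exactly where the content of the proposition sits. The classical moving lemma for Dedekind domains produces, in a given ideal class, an ideal coprime to any prescribed \emph{nonzero ideal}, i.e.\ a representative divisor avoiding any prescribed \emph{finite set of maximal ideals}. You need far more: the support must avoid the residue classes containing $D_{1},\dots,D_{r}$, each of which contains infinitely many maximal ideals of $\cA$ and is not Zariski-closed, so no purely algebraic localization argument applies. The ``good-reduction dictionary'' you invoke to fix this --- that specialization of divisors induces an isomorphism ${\rm Pic}(C_{\zeta})\cong{\rm Pic}(\widetilde{C_{\zeta}})$, so that linear equivalence can be arranged on the reduction and lifted --- is essentially van der Put's theorem on the class group of a one-dimensional affinoid with good reduction: true, but a substantial result that you assert rather than prove or cite, and even granting it you must still check that the lifted rational function realizing the equivalence has its zeros and poles in the intended residue classes (a priori it could acquire cancelling zeros and poles inside some $D_{i}$). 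A secondary mismatch: every $U$ produced by lemma \ref{projneigh1} contains \emph{all} of $C_{\zeta}$, whereas your $U$ excises disks from it; the weaker conclusion would still serve corollary \ref{freean}, but it is not the statement as written, so you would need to either justify the weaker form or explain why the excised classes can be recovered.
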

\begin{proof} We may assume that $Y$ is already of the form of the $U$ in formula  \ref{coord}.
We then pick an affinoid neighborhood $V$ of $\zeta$ in $Y$, such that $\cM_{|V}$ is free. Notice that we may choose $V$ so that 
$\sY_{\eta}^{\an} \setminus V$ is a disjoint union of~:
\ben
\item
the full residue classes $]c_{j}[_{\fY}$, for $j=1,\dots,N$;
\item
some open disks $D^{\p}_{1}, \dots,D^{\p}_{r}$ properly contained in  $D_{1}, \dots,D_{r}$, respectively;
\item
some open disks $E_{i}^{\p}$, with  $Y_{i} \subset E_{i}^{\p} \subset E_{i}$, for $i=1,\dots,n$ (proper inclusions);
\item
a finite number of further open disks $F_{h}$ properly contained in distinct  residue classes $R_{h}$, for $h \in H$. 
\een

We then extend $\cM_{|V}$ to $\sY_{\eta}^{\an}$ as follows. In every residue class $E_{i}$, for $i=1,\dots,n$,  $]c_{j}[_{\fY}$, for $j=1,\dots,N$, or 
$R_{h}$, for $h \in H$, 
we extend $\cM_{|V}$ by direct image, so that 
we obtain a free module on 
$$V \,  \cup \, \l(\bigcup_{i} E_{i}\r) \, \cup \, \l(\bigcup_{j} \, ]c_{j}[_{\fY}\r)  \,  \cup \, \l(\bigcup_{h} F_{h}\r) \; .$$
On the classes $D_{i}$, we extend it by $\cM_{|D_{i}}$, for $i=1,\dots, r$. So, we obtain a locally free $\cO_{\sY_{\eta}^{\an}}$-module $\ol{\cM}$ of rank $m$ extending $\cM$. So, $\ol{\cM} = \cE^{\an}$, for a free $\cO_{\sY_{\eta}}$-module $\cE$ of rank $m$. Then, $\cE$ is free on any affine open subset of $\sY_{\eta}$, and in particular $\cM$ is free on $Y =U$.
%
%
\end{proof}
 \begin{cor}
 \label{freean} Let  $D \subset Y$ be as in lemma \ref{projneigh1} and let $\zeta$ be the boundary point of $D$ in $Y$. Let $\cM$ be a  locally free $\cO_{Y}$-module of constant rank $m$. Then $\cM(D) \cap \cM_{\zeta}$ is a free module over $\cO_{Y}(D) \cap \cO_{Y,\zeta}$ of rank $m$. For any choice of a basis 
 $\ul{v} := (v_{1},\dots,v_{m})$ of $\cM(D) \cap \cM_{\zeta}$ over $\cO_{Y}(D) \cap \cO_{Y,\zeta}$, let 
$$||
\sum_{i=1}^{m}a_{i}v_{i} ||_{\ul{v}, C} = \max_{i}||a_{i}||_{D} \; ,
$$
be the corresponding norm on $\cM(D) \cap \cM_{\zeta}$.   We define $(\sH^{(\ul{v})}_{Y}(\cM,D), ||\;||_{\ul{v}, C})$ as the completion of 
$\cM(D) \cap \cM_{\zeta}$ under the norm $||\;||_{\ul{v}, C}$. It is a Banach module  over the $k$-Banach algebra $(\sH_{Y}(D),||\;||_{D})$. For two choices $\ul{u}$ and $\ul{v}$ of an $\cO_{Y}(D) \cap \cO_{Y,\zeta}$-basis of $\cM(D) \cap \cM_{\zeta}$, the unique map of $(\sH_{Y}(D),||\;||_{D})$-Banach modules 
$$(\sH^{(\ul{u})}_{Y}(\cM,D), ||\;||_{\ul{u}, C}) \to (\sH^{(\ul{v})}_{Y}(\cM,D), ||\;||_{\ul{v}, C}) \; ,$$
sending $u_{i}$ to $v_{i}$ for all $i$, is a bounded isomorphism. 
\end{cor}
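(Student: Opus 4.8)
The plan is to isolate the freeness of $\cM(D)\cap\cM_{\zeta}$ over $R:=\cO_{Y}(D)\cap\cO_{Y,\zeta}$ as the one substantive point; the norm, the completion, and the comparison of bases are then purely formal consequences. First I would prove freeness. Proposition \ref{locfree}, applied to the single disk $D$, furnishes a neighborhood $U$ of $\zeta$ of the shape exhibited in lemma \ref{projneigh1} on which $\cM_{|U}$ is free of rank $m$; any such $U$ contains the union $C_{\zeta}$ of all open disks with boundary point $\zeta$, hence contains both $D$ and $\zeta$. Fix an $\cO_{Y}(U)$-basis $w_{1},\dots,w_{m}$ of $\cM(U)$. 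Its restriction is simultaneously an $\cO_{Y}(D)$-basis of $\cM(D)$ and an $\cO_{Y,\zeta}$-basis of the stalk $\cM_{\zeta}$, and both $\cM(D)$ and $\cM_{\zeta}$ embed into the sections of $\cM$ over a half-open annulus neighborhood of $\zeta$ inside $D$, where the $w_{i}$-coordinates of any element are one and the same. Thus, for $\xi\in\cM(D)\cap\cM_{\zeta}$, writing $\xi=\sum_{i}a_{i}w_{i}$, the coefficient $a_{i}$ lies at once in $\cO_{Y}(D)$ and in $\cO_{Y,\zeta}$, that is $a_{i}\in R$; the converse inclusion is clear, so $\cM(D)\cap\cM_{\zeta}=\bigoplus_{i}R\,w_{i}$ is free of rank $m$ over $R$.

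Granting freeness, the remaining steps are routine. For a basis $\ul{v}$ the prescription $||\sum_{i}a_{i}v_{i}||_{\ul{v},C}=\max_{i}||a_{i}||_{D}$ defines a module norm compatible with the sup-norm on $R$, and since $\sH_{Y}(D)$ is by definition the completion of $R$ under $||\;||_{D}$, the completion $\sH^{(\ul{v})}_{Y}(\cM,D)$ is identified with the maximum-norm module $\bigoplus_{i}\sH_{Y}(D)\,v_{i}\cong\sH_{Y}(D)^{m}$, evidently a free Banach module over $(\sH_{Y}(D),||\;||_{D})$. For two bases $\ul{u},\ul{v}$, related by a matrix $A\in{\rm GL}_{m}(R)$ (so that each $v_{i}$ lies in the $R$-span of the $u_{j}$ and conversely), the prescription $u_{i}\mapsto v_{i}$ extends uniquely to the $\sH_{Y}(D)$-linear map $\sum_{i}a_{i}u_{i}\mapsto\sum_{i}a_{i}v_{i}$; as the image carries $\ul{v}$-coordinates $(a_{1},\dots,a_{m})$, one reads off $||\sum_{i}a_{i}v_{i}||_{\ul{v},C}=\max_{i}||a_{i}||_{D}=||\sum_{i}a_{i}u_{i}||_{\ul{u},C}$, and symmetrically for the inverse $v_{i}\mapsto u_{i}$. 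Hence the map is in fact an isometry, in particular the asserted bounded isomorphism of $\sH_{Y}(D)$-Banach modules.

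The hard part is entirely contained in the first step: one must secure a single neighborhood of $\zeta$ that is at once a neighborhood of the whole disk $D$ and on which $\cM$ trivializes, so that sections over $D$ and germs at $\zeta$ are expressed in one common basis. This is exactly what proposition \ref{locfree} delivers (through the projective extension and direct-image construction of its proof), and once it is available the intersection $\cM(D)\cap\cM_{\zeta}$ unwinds coefficient-by-coefficient. Everything downstream is bookkeeping with the sup-norm.
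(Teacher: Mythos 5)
Your proposal is correct and follows exactly the route the paper intends: the paper gives no explicit proof of this corollary, presenting it as an immediate consequence of Proposition \ref{locfree}, and your argument (trivialize $\cM$ on a single neighborhood $U\supset C_{\zeta}\supset D\cup\{\zeta\}$, read off coefficients in the common basis to get freeness of $\cM(D)\cap\cM_{\zeta}$ over $\cO_{Y}(D)\cap\cO_{Y,\zeta}$, then treat the norms, completions and change of basis as formal bookkeeping) is precisely the deduction being left to the reader. Nothing further is needed.
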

\begin{defn} \label{vecanal} We denote by $(\sH_{Y}(\cM,D), ||\;||_{D})$ any representative of  the uniquely defined isomorphism class of free finitely generated $(\sH_{Y}(D),||\;||_{D})$-Banach modules defined by $(\sH_{Y}(\cM,D), ||\;||_{\ul{v}, C})$, for any choice of $\ul{v}$.
 \end{defn}

Let $T$ be an overconvergent coordinate on $D$ in $Y$. Then, for $C(0;r,1)$ as in (\ref{annulus}),  we define $\sH_{D,Y}(r,1) := \sH_{Y}(T^{-1}(C(0;r,1)))$.  
In the particular case of $D = D(0,1^{-}) \subset \Pan$, with canonical coordinate $T$, $\sH(r,1) = \sH_{D, \Pan}(C(0;r,1))$.  
\end{parag}
We assume that the entries of $G$ in (\ref{diffsyst}) are in $\sH_{D,Y}(r,1)$. Notice that 
$$
Frac(\sH_{Y}(D)) \subset \bigcup_{r < 1} \sH_{D,Y}(r,1) \;,
$$
so that if  the entries of $G$ are quotients of elements of $\sH_{Y}(D)$, then the previous assumption is satisfied for $r<1$, sufficiently close to 1. 
We let $t = T(\zeta) \in \sH(\zeta)$.
    Notice that $g \mapsto |g(\zeta)|$ is a bounded multiplicative norm on $\sH_{D,Y}(r,1)$. 
  The following (almost) classical definition will later be updated. 
 \begin{defn}
 The \emph{generic radius of convergence}  $R_{D \subset Y}(\Sigma)$ of the system $\Sigma$ of (\ref{diffsyst}) on $D\subset Y$, is  defined by extending the field of constants from $k$ to the valued field $\sH(\zeta)$, so that the point $\zeta$  determines a canonical \cite[Intro]{Cont} $\sH(\zeta)$-rational point $\zeta^{\p} \in Y \wt_{k} \sH(\zeta)$, such that $T(\zeta^{\p}) = t$. Notice that the entries of $G$ are  analytic functions on the open disk of $T$-radius 1 in  $Y \wt_{k} \sH(\zeta)$, centered at  $\zeta^{\p}$, so that the  system \ref{diffsyst} is defined on that disk. Then $R_{D \subset Y}(\Sigma)$ is defined  as the $T$-radius  of the maximal open disk around $\zeta^{\p}$, of radius not exceeding 1,  on which all solutions of $\Sigma$ in $\sH(\zeta)[[T - t]]$ converge. 
\end{defn}

\begin{parag}
 The number $R_{D \subset Y}(\Sigma)$ is computed as follows. We first iterate (\ref{diffsyst}) into 
  \beq \label{diffsystenn}
   \frac{1}{n!} \, \frac{d^{n} \, Y}{d\,T^{n}} = G_{[n]}\,Y \; ,   
 \eeq
 and then
 \beq \label{radspec}
 R_{D \subset Y}(\Sigma) = \min (1, \liminf_{i \to \infty}|G_{[i]}(\zeta)|^{-1/i}) = \liminf_{i \to \infty} \max (1, |G_{[i]}(\zeta)|)^{-1/i} 
 \in (0,1]  \; ,
 \eeq
 where the absolute value of a matrix is the maximum of the absolute values of its entries. 
\end{parag}
 The generic radius of convergence of (\ref{diffsyst}) is bounded below as follows  \cite[p. 94]{DGS}.
 \begin{prop}{\bf (Trivial  Estimate)} \label{trivial}
 $$R_{D \subset Y}(\Sigma) \geq \sup(1,|G(\zeta)|)^{-1} \, p^{-\frac{1}{p-1}} \; .$$
 \end{prop}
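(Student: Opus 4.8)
The plan is to reproduce, in the present language, the classical argument of \cite[p.~94]{DGS}: combine the recursive description of the iterated matrices $G_{[n]}$ with the fact that, in a normalized coordinate, differentiation does not increase the Gauss seminorm, and then feed the resulting bound into the spectral formula (\ref{radspec}) using Legendre's formula for $\ord_p(n!)$. First I would record the recursion. Differentiating (\ref{diffsyst}) repeatedly and comparing with (\ref{diffsystenn}) gives $G_{[0]} = I$, $G_{[1]} = G$, and, for $n \geq 0$,
\[
G_{[n+1]} = \frac{1}{n+1}\left( \frac{d\,G_{[n]}}{dT} + G_{[n]}\,G \right) \; ,
\]
since $\frac{d^{n+1}Y}{dT^{n+1}} = n!\,(G_{[n]}' + G_{[n]}G)\,Y = (n+1)!\,G_{[n+1]}\,Y$.

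The only analytic input is the following. The multiplicative seminorm $g \mapsto |g(\zeta)|$ on $\sH_{D,Y}(r,1)$ is the Gauss seminorm attached to the type-$2$ point $\zeta$, which is the maximal point of the unit disk in the \emph{normalized} (overconvergent) coordinate $T$; in particular $|T(\zeta)| = 1$. Expanding an analytic element as a (Laurent-type) series in $T$ and using $|n| \leq 1$ for every integer $n$, one obtains that $d/dT$ is contracting for this seminorm:
\[
\left| \frac{d\,f}{dT}(\zeta)\right| \leq |f(\zeta)| \qquad \text{for all } f \in \sH_{D,Y}(r,1) \; ,
\]
and hence, applied entrywise, $|G_{[n]}'(\zeta)| \leq |G_{[n]}(\zeta)|$. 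This is exactly the step where the normalization of $T$ (so that $\zeta$ is the radius-$1$ Gauss point) is used, and getting the clean constant $1$ here is what makes the final estimate sharp.

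Writing $g := |G(\zeta)|$, I would then prove by induction on $n$ the bound
\[
|G_{[n]}(\zeta)| \;\leq\; \frac{\sup(1,g)^{\,n}}{|n!|} \; .
\]
The cases $n = 0, 1$ are immediate. For the inductive step, taking the $\zeta$-seminorm of the recursion and using ultrametricity together with the derivative estimate gives
\[
|G_{[n+1]}(\zeta)| \;\leq\; \frac{1}{|n+1|}\,\max\bigl(|G_{[n]}(\zeta)|,\; |G_{[n]}(\zeta)|\,g\bigr) \;=\; \frac{\sup(1,g)\,|G_{[n]}(\zeta)|}{|n+1|} \; ,
\]
and substituting the inductive hypothesis, together with $|(n+1)!| = |n+1|\,|n!|$, closes the induction.

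Finally I would pass to (\ref{radspec}). Since $\sup(1,g) \geq 1$ and $|n!| \leq 1$, the bound above gives $\max(1,|G_{[n]}(\zeta)|) \leq \sup(1,g)^{n}/|n!|$, whence $\max(1,|G_{[n]}(\zeta)|)^{-1/n} \geq \sup(1,g)^{-1}\,|n!|^{1/n}$. By Legendre's formula $\ord_p(n!) = (n - s_p(n))/(p-1)$, where $s_p(n) \geq 0$ denotes the sum of the base-$p$ digits of $n$, one has $|n!|^{1/n} = p^{-\frac{1}{p-1}}\,p^{\,s_p(n)/(n(p-1))} \geq p^{-\frac{1}{p-1}}$. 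Therefore $\max(1,|G_{[n]}(\zeta)|)^{-1/n} \geq \sup(1,g)^{-1}\,p^{-\frac{1}{p-1}}$ for every $n$, and taking the $\liminf$ in (\ref{radspec}) yields $R_{D \subset Y}(\Sigma) \geq \sup(1,|G(\zeta)|)^{-1}\,p^{-\frac{1}{p-1}}$, as claimed. The argument is otherwise formal; the one point demanding care is the analytic input of the second paragraph, namely the verification that $|\,\cdot\,(\zeta)|$ really is the radius-$1$ Gauss seminorm for the chosen $T$, so that $d/dT$ is contracting with constant exactly $1$ — the arithmetic of $\ord_p(n!)$ then supplies the factor $p^{-\frac{1}{p-1}}$ for free.
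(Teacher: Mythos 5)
Your proof is correct and is precisely the classical Dwork--Robba argument of \cite[p.~94]{DGS} that the paper invokes without reproducing: the recursion for $G_{[n]}$, the contractivity of $d/dT$ for the Gauss seminorm at $\zeta$ in the normalized coordinate, the inductive bound $|G_{[n]}(\zeta)| \leq \sup(1,|G(\zeta)|)^{n}/|n!|$, and the estimate $|n!|^{1/n} \geq p^{-1/(p-1)}$ fed into (\ref{radspec}). Nothing to object to; this is essentially the same approach as the (cited) proof.
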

  \begin{parag}
 We now assume that the entries of the matrix $G$ in (\ref{diffsyst}) extend to meromorphic functions, necessarily with a finite number of zeros and poles, 
 on the open disk $D=D(0,1^{-})$.  
 We also assume that all singularities of the system $\Sigma$ in $D(0,1^{-})$ are \emph{apparent}  \cite[V.5]{DGS}, \ie that at any point $a \in D(0,1^{-})(k)$,  $\Sigma$ admits a matrix solution in $GL(n,k((T-a)))$.
Then the  following  \emph{Transfer Theorem} in a disk with only apparent singularities,  
similar to \cite[IV.5. A]{DGS}, holds.
 \end{parag}
 \begin{thm}{\bf (Transfer Theorem)} \label{transfer} 
 Under the previous assumptions, any solution of $\Sigma$ at any $k$-rational point $x \in D(0,1^{-})$ is meromorphic in a disk of $T$-radius $R_{D \subset Y}(\Sigma)$ around $x$. 
 \end{thm}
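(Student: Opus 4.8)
The plan is to reduce theorem \ref{transfer} to the classical \emph{transfer theorem in the absence of singularities}, for which the argument is the standard one of \cite[IV.3]{DGS}: if the entries of $G$ are analytic on $D = D(0,1^{-})$, then the fundamental solution matrix at an ordinary $k$-rational point $x$ is $Y(T) = \sum_{i \geq 0} G_{[i]}(x)(T-x)^{i}$, and the non-archimedean maximum principle gives $|G_{[i]}(x)| \leq |G_{[i]}(\zeta)|$ for every $i$ (the value at an interior point being dominated by the value at the Gauss point $\zeta$); by (\ref{radspec}) the radius of convergence of $Y$ at $x$ is therefore $\geq \liminf_{i}|G_{[i]}(\zeta)|^{-1/i} = R_{D \subset Y}(\Sigma)$. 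Thus the whole difficulty is concentrated at the apparent singularities $a_{1},\dots,a_{m} \in D(k)$ of $G$, and the point of the theorem is that allowing the solutions to become \emph{meromorphic} recovers the full generic radius.

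First I would record the reductions. Writing $R := R_{D \subset Y}(\Sigma)$, it suffices to produce one fundamental solution matrix of $\Sigma$ that is meromorphic on the disk $D(x,R^{-})$ about $x$, since every solution at $x$ is obtained from it by right multiplication by a constant vector; the case of a singular base point $x = a_{j}$ is automatically included, the required local meromorphy being exactly the hypothesis that $\Sigma$ admits a solution matrix in $GL(n,k((T-a_{j})))$. The key step is then to \emph{remove} the apparent singularities by a meromorphic gauge transformation. Since each $a_{j}$ is apparent, $\Sigma$ has near $a_{j}$ a solution matrix $U_{j} \in GL(n,k((T-a_{j})))$, and gauging by it, \ie setting $Y = U_{j}Z$, replaces $G$ by $U_{j}^{-1}(GU_{j} - U_{j}') = 0$, so the singularity disappears. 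I would assemble these local trivializations into a single matrix $H$, meromorphic on all of $D$, with poles concentrated at the $a_{j}$ and hence analytic and invertible near $\zeta$, such that the gauge-transformed system $\Sigma'$ with matrix $G' = H^{-1}(GH - H')$ has entries analytic on the whole of $D$.

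Because $H$ and $H^{-1}$ are analytic near the Gauss point $\zeta$, this gauge is an isomorphism of differential modules over $\sH(\zeta)$ and so does not alter the generic radius: $R_{D \subset Y}(\Sigma') = R$. The no-singularity transfer theorem now applies to $\Sigma'$ and shows that its solutions at $x$ are analytic on $D(x,R^{-})$; transforming back, every solution of $\Sigma$ is $H$ times an analytic function, hence meromorphic on $D(x,R^{-})$, with poles only at those $a_{j}$ contained in $D(x,R^{-})$. This is precisely the assertion.

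The hard part is the global construction of $H$, \ie passing from the purely local trivializations $U_{j}$ to one meromorphic gauge on $D$ that simultaneously regularizes every $a_{j}$ while preserving the generic radius; this is where the hypothesis "apparent" (\cf \cite[V.5]{DGS}) enters in full force, the single-valuedness of the local meromorphic solutions (integer local exponents, no logarithmic terms) being what permits the patching, and the finiteness of the pole orders being what keeps $G'$ analytic. An alternative that avoids constructing $H$ globally is the direct estimate of \cite[IV.5]{DGS}: one multiplies each iterate $G_{[i]}$ by the powers $\prod_{j}(T-a_{j})^{\nu_{j}(i)}$ needed to clear its poles, applies the maximum principle to the resulting analytic functions, and divides back out; the obstacle there is to show that the pole orders $\nu_{j}(i)$ grow only \emph{linearly} in $i$ — once more a consequence of apparentness — so that dividing them out does not affect the exponential growth rate that governs $R$ in (\ref{radspec}).
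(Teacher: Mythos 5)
Your overall architecture is the right one and is in fact the paper's: gauge away the apparent singularities by a matrix meromorphic on $D$, check that the gauge does not change the generic radius at $\zeta$, and then run the trivial no-singularity transfer $|G_{[i]}(x)|\leq |G_{[i]}(\zeta)|=\|G_{[i]}\|_{D}$. The final step and the reduction are exactly as in the paper. But the central construction --- the global matrix $H$ --- is precisely what you leave undone, and the route you sketch for it would not work as stated. Local fundamental solution matrices $U_{j}\in GL(n,k((T-a_{j})))$ at distinct points $a_{j}$ live on disjoint (formal) neighborhoods; there is no overlap on which to compare them, so there is nothing to ``patch'': a gauge matrix is not a solution matrix, and gluing local gauges into a single meromorphic $H$ on $D$ is not a cocycle problem you have set up. Moreover, even granting such an $H$, analyticity of $H$ and $H^{-1}$ near $\zeta$ is not by itself the property you need; what preserves the generic radius is that $H$ be \emph{$\zeta$-unimodular}, $|H(\zeta)|=|H^{-1}(\zeta)|=1$ (equivalently, that $H^{\pm1}$ converge and stay invertible on the whole generic disk $D(t,1^{-})$ over $\sH(\zeta)$), so that $\max(1,|G^{[H]}_{[i]}(\zeta)|)=\max(1,|G_{[i]}(\zeta)|)$ for all $i$ and formula \ref{radspec} gives $R_{D\subset Y}(\Sigma^{[H]})=R_{D\subset Y}(\Sigma)$. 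Nothing in your assembly procedure controls $|H(\zeta)|$.

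The paper closes both gaps at once by invoking Proposition 5.1 of \cite[Chap.~V]{DGS}: since the singularities are apparent and finite in number, there is a \emph{rational} matrix $P\in GL(n,k(T))$, unimodular for the Gauss norm induced by $\zeta$, such that $Y\mapsto PY$ removes all singularities from $D(0,1^{-})$; unimodularity then gives the equality of the iterates' norms at $\zeta$ directly, and the trivial transfer finishes the argument. Your alternative route (clearing poles of $G_{[i]}$ by factors $\prod_{j}(T-a_{j})^{\nu_{j}(i)}$ with $\nu_{j}(i)$ growing linearly in $i$) is also workable in principle but again the key estimate on $\nu_{j}(i)$ is asserted rather than proved. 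To repair your write-up, replace the ``assemble the $U_{j}$'' step by the citation of the shearing construction of \cite[Chap.~V, Prop.~5.1]{DGS} and replace ``analytic near $\zeta$'' by ``$\zeta$-unimodular'' in the radius-preservation step.
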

 \begin{proof} 
 The point $\zeta$ induces on the field $k(T)$, of rational functions with coefficients in $k$ in the overconvergent coordinate $T$, the classical Gauss norm $|\;|_{Gauss}$ \cite{DGS}. Since the entries of $G$ have a finite number of poles in $D$, 
 we can follow the procedure of  Proposition 5.1 of \cite[Chap. V]{DGS}, to determine a $|\;|_{Gauss}$-unimodular 
 matrix $P \in GL(n,k(T))$, such that $Y \mapsto PY$
  transforms $\Sigma$  into a system 
  $$\Sigma^{[P]}   :    \frac{d \, Y}{d\,T} = G^{[P]}\,Y \; ,   
  $$ 
  with no singularities in $D(0,1^{-})$. Notice that the entries of $G^{[P]}$ are then in $\sH_{Y}(D)$. The matrix $P\in GL(n,k(T))$ is  \emph{$\zeta$-unimodular}, that is  $|P(\zeta)| = |P^{-1}(\zeta)| =1$. It follows that $\max (1, |G^{[P]}_{[i]}(\zeta)|) = \max (1, |G_{[i]}(\zeta)|)$, $\forall i$, and formula \ref{radspec} shows that 
$R_{D \subset Y}(\Sigma^{[P]}) = R_{D \subset Y}(\Sigma)$. We may then assume from the beginning that  $\Sigma$ has no singularities in $D(0,1^{-})$, \ie that the entries of $G$ are in $\sH_{Y}(D)$. 
  But then clearly  $|G_{[i]}(x)| \leq |G_{[i]}(\zeta)| = ||G_{[i]}||_{D}$. A solution matrix of $\Sigma$  at $x$ is given by $Y_{x}(T) = \sum_{n=0}^{\infty} G_{[i]}(x) (T-T(x))^{n}$. So, $Y_{x}(T)$ converges for $|T-T(x)| < R_{D \subset Y}(\Sigma)$.
 \end{proof}

\section{Curves with good reduction}\label{HEART}
\begin{parag} \label{heart}
 We assume here 
 that 
 $\varphi: Y \to X$ is an \'etale covering of  $k$-analytic curves which is the generic fiber of a finite  morphism $\Phi: \fY \to \fX$ of smooth connected $\kc$-formal schemes of relative dimension 1. We observe that if $Y = \Pan$, then $\varphi$ is an isomorphism: we exclude this trivial case. 
 Let $\fF := \Phi_{\ast} \cO_{\fY}$,   a  coherent and locally free $\cO_{\fX}$-module   such that $\fF_{\eta} = \cF$, the $\cO_{X}$-module underlying the connection  (\ref{dirconn}).
\begin{prop} \label{deriv} There exists $\pi_{\Phi} \in \kc$, non-zero, such that
$$
\Phi^{\ast} \Omega^{1}_{\fX/\kc} = \pi_{\Phi} \Omega^{1}_{\fY/\kc} \; .
$$
Therefore
$$
\Phi_{\ast} \Omega^{1}_{\fY/\kc} = \fF \otimes \pi_{\Phi}^{-1} \Omega^{1}_{\fX/\kc} \; .
$$
\end{prop}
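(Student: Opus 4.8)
The plan is to deduce the proposition from the cotangent map of $\Phi$, using crucially that $\varphi$ is étale on \emph{all} of the generic fibre and that $\fY$ has good reduction. Since $\fX$ and $\fY$ are smooth of relative dimension $1$ over $\kc$, both $\Omega^1_{\fX/\kc}$ and $\Omega^1_{\fY/\kc}$ are invertible $\cO$-modules, and the differential of $\Phi$ furnishes a morphism of invertible $\cO_{\fY}$-modules $d\Phi:\Phi^{\ast}\Omega^1_{\fX/\kc}\to\Omega^1_{\fY/\kc}$ with cokernel $\Omega^1_{\fY/\fX}$. As $\varphi=\Phi_{\eta}$ is étale, the morphism $d\Phi\otimes_{\kc}k$ is an isomorphism; since $\Phi^{\ast}\Omega^1_{\fX/\kc}$ is $\kc$-flat, $d\Phi$ is injective and $\Omega^1_{\fY/\fX}$ is $\kcc$-torsion (its generic fibre $\Omega^1_{Y/X}$ vanishes). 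Thus $d\Phi$ identifies $\Phi^{\ast}\Omega^1_{\fX/\kc}$ with $\cI\cdot\Omega^1_{\fY/\kc}$ for a locally principal \emph{different ideal} $\cI\subset\cO_{\fY}$ with $\cI_{\eta}=\cO_{Y}$. The entire content of the proposition is then that $\cI$ is generated by a single constant $\pi_{\Phi}\in\kc$.

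To produce $\pi_{\Phi}$ I would use that $\fY$ is connected and smooth over $\kc$, so its special fibre $\fY_{s}$ is a smooth connected, hence irreducible, curve over $\kt$. Let $v_{\fY_{s}}$ denote the valuation attached to the generic point of $\fY_{s}$; because $\fY_{s}$ is reduced, $v_{\fY_{s}}$ restricts to the valuation of $k$ on $\kc$, so its value group is that of $k$ and is therefore divisible, and concretely $v_{\fY_{s}}(f)=-\log|f(\zeta)|$ at the Gauss point $\zeta$. Let $m=v_{\fY_{s}}(\cI)\ge 0$ be the order of the different along $\fY_{s}$; this is well defined because changing the local trivialisations of the two line bundles multiplies a local generator $\delta$ of $\cI$ by a unit, on which $v_{\fY_{s}}$ vanishes. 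By divisibility I may choose $\pi_{\Phi}\in\kc$ with $v_{\fY_{s}}(\pi_{\Phi})=m$. It then suffices to prove $\cI=(\pi_{\Phi})$, that is, that each local generator $\delta_{1}:=\delta/\pi_{\Phi}$ of $\cI\cdot(\pi_{\Phi})^{-1}$ is a unit of $\cO_{\fY}$.

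By construction $v_{\fY_{s}}(\delta_{1})=0$, so the reduction $\bar\delta_{1}$ is a nonzero section of the corresponding reduced invertible sheaf on the irreducible $\fY_{s}$, hence vanishes only at isolated closed points; by Nakayama over the $\kcc$-adically complete $\cO_{\fY}$, $\delta_{1}$ is a unit as soon as $\bar\delta_{1}$ has no zero. This is the step I expect to be the crux, and it is exactly where étaleness on all of $Y$ enters. I claim a zero of $\bar\delta_{1}$ at a closed point $\bar y_{0}\in\fY_{s}$ would force a zero of $\delta_{1}$, hence of $\delta$, in the residue disk $\sp_{\fY}^{-1}(\bar y_{0})\subset Y$. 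Indeed, since $\fY$ has good reduction this residue class is an open disk on which $\delta_{1}$ is a power series of supremum norm $1$ whose reduction is the local expansion of $\bar\delta_{1}$ at $\bar y_{0}$; as that expansion is nonzero with vanishing constant term, the Newton polygon of $\delta_{1}$ on the disk has a side of positive slope and $\delta_{1}$ vanishes inside it. Such a zero of $\delta=\pi_{\Phi}\delta_{1}$ is a point where $d\Phi$ degenerates, i.e. a critical point of $\varphi$, contradicting étaleness. Hence $\bar\delta_{1}$ is nowhere zero, $\delta_{1}$ is a unit, and $\Phi^{\ast}\Omega^1_{\fX/\kc}=\cI\,\Omega^1_{\fY/\kc}=\pi_{\Phi}\,\Omega^1_{\fY/\kc}$.

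Finally, the displayed consequence is formal. Rewriting the first identity as $\Omega^1_{\fY/\kc}=\pi_{\Phi}^{-1}\Phi^{\ast}\Omega^1_{\fX/\kc}$ and applying $\Phi_{\ast}$, the projection formula for the finite flat $\Phi$ and the invertible $\Omega^1_{\fX/\kc}$ gives $\Phi_{\ast}\Omega^1_{\fY/\kc}=\pi_{\Phi}^{-1}(\Phi_{\ast}\cO_{\fY})\otimes\Omega^1_{\fX/\kc}=\fF\otimes\pi_{\Phi}^{-1}\Omega^1_{\fX/\kc}$, as asserted. The only points that genuinely need care are the connectedness, hence irreducibility, of $\fY_{s}$, which guarantees that a single order $m$ controls the different everywhere, and the reduction-of-zeros argument, which uses both that $\varphi$ is everywhere étale and that $\fY$ has good reduction; everything else is bookkeeping with invertible sheaves.
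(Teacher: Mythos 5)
Your proof is correct and follows essentially the same route as the paper's: both choose $\pi_{\Phi}$ by the order of the different at the generic point of the special fibre (where the value group equals $|k^{\times}|$), and then use \'etaleness to show that on each residue disk the derivative of $\varphi$ has no zeros and hence constant absolute value equal to $|\pi_{\Phi}|$ --- your Newton--polygon step is exactly that constancy argument expressed in reduction language. The only difference is packaging: the paper works with the explicit local expression $S=h(T)$ of $\varphi$ on a residue disk, while you work with the different ideal $\cI=\Phi^{\ast}\Omega^{1}_{\fX/\kc}\cdot(\Omega^{1}_{\fY/\kc})^{-1}$ and its reduction to $\fY_{s}$.
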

\begin{proof} The second statement follows from the first by the projection formula.  Let $\xi_{\fY}$ (resp. $\xi_{\fX}$) be the generic point of $\fY$ (resp. $\fX$), and let $\xi_{Y}$ (resp. $\xi_{X}$) be the maximal point of $Y$ (resp. $X$). The local ring $\cO_{\fY,\xi_{\fY}}$ (resp. $\cO_{\fX,\xi_{\fX}}$) of $\xi_{\fY}$ (resp. $\xi_{\fX}$)  is a valuation ring of rank 1:  its valuation  
extends the one of $\kc$, and has the same value group. We have  $\cO_{\fY,\xi_{\fY}} = \kappa(\xi_{Y})^{\circ}$ (resp. $\cO_{\fX,\xi_{\fX}} = \kappa(\xi_{X})^{\circ}$), hence $k \otimes_{\kc} \cO_{\fY,\xi_{\fY}} = \cO_{Y,\xi_{Y}} =\kappa(\xi_{Y})$ (resp. $k \otimes_{\kc} \cO_{\fX,\xi_{\fX}} = \cO_{X,\xi_{X}} =\kappa(\xi_{X})$). Let $\pi_{\Phi} \in \kc$ be such that 
$$
(\Phi^{\ast} \Omega^{1}_{\fX/\kc})_{\xi_{\fY}} = \pi_{\Phi} (\Omega^{1}_{\fY/\kc})_{\xi_{\fY}} \; .
$$
 Let $E$ be any maximal open disk in $Y$. Since $\varphi(\xi_{Y}) = \xi_{X}$, $E^{\p} := \varphi (E)$ is a maximal open disk in $X$. Let $T$ (resp. $S$)  be  an overconvergent  formal coordinate on $E$ in $Y$ (resp. on $E^{\p}$ in $X$). The map $\varphi$ is then expressed in $E$ by 
$$
S = h(T) \; ,
$$
where $h(T) \in k[[T]]$ is a power series converging and bounded  in $E$, with $||h||_{E} = |h(\xi_{Y})| =1$. Since $\varphi$ is unramifed on $E$, the derivative $dh/dT$ does not vanish on $E$, hence it has a constant absolute value, necessarily equal to $|\pi_{\Phi}|$.
\end{proof}
\begin{cor} \label{appltriv} For any $b \in X(k)$, the connection $(\cF,\nabla_{\cF})$ admits a full set of solutions converging in 
$D_{\fX}(b,p^{-\frac{1}{p-1}}|\pi_{\Phi}|^{-})$.
\end{cor}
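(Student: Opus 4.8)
The plan is to realize $(\cF,\nabla_{\cF})$ as a differential system with respect to an \'etale formal coordinate coming from $\fX$, to bound its connection matrix at the Gauss point by $|\pi_{\Phi}|^{-1}$ using proposition \ref{deriv}, and then to feed this bound into the Trivial Estimate (proposition \ref{trivial}) followed by the Transfer Theorem (theorem \ref{transfer}). First I would observe that, since $\varphi$ is an \'etale covering, the branch locus $B$ is empty, so the object $(\cF,\nabla_{\cF})$ of (\ref{dirconn}) is a connection on $X$ with \emph{no} poles; the disk in the statement is then measured with a formal coordinate of $\fX$, its radius being $p^{-\frac{1}{p-1}}|\pi_{\Phi}|$ and the superscript marking an open disk. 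Fix $b \in X(k)$, and work on the residue disk $E^{\p}$ of $\fX$ containing $b$, with formal coordinate $S$ and Gauss point $\xi_{X}$; over the \'etale locus choose a formal coordinate $T$ on $Y$ in which $\varphi$ is given by $S = H(T)$.

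Next I would compute the connection in an integral basis. Trivializing $\fF = \Phi_{\ast}\cO_{\fY}$ over the (formal) residue disk by a basis $\ul{e} = (e_{1},\dots,e_{d})$, whose members are integral functions on $\fY$, and using the \'etale-local expression $\nabla_{d/dS}\,e_{j} = \frac{1}{H'(T)}\,\frac{d e_{j}}{dT}$ of the direct-image connection, I obtain the matrix $G$ of the system $\Sigma : dY/dS = GY$. The key estimate is $|G(\xi_{X})| \leq |\pi_{\Phi}|^{-1}$: indeed $d e_{j}/dT$ is again integral because $|n| \leq 1$ for every integer $n$, while proposition \ref{deriv} gives $H'(T) = \pi_{\Phi}\cdot(\text{unit})$, so that $\frac{1}{H'}\frac{d e_{j}}{dT}$ has sup-norm $\leq |\pi_{\Phi}|^{-1}$ at the Gauss point over $\xi_{X}$; since $\ul{e}$ is an integral basis, the norm of a section of $\cF$ at $\xi_{X}$ equals the maximum of the Gauss norms of its coordinates, whence $|G(\xi_{X})| \leq |\pi_{\Phi}|^{-1}$.

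Now I would invoke the Trivial Estimate. As $|\pi_{\Phi}| \leq 1$ we have $\sup(1,|G(\xi_{X})|) \leq |\pi_{\Phi}|^{-1}$, so proposition \ref{trivial} yields
$$R_{E^{\p}\subset X}(\Sigma) \;\geq\; \sup(1,|G(\xi_{X})|)^{-1}\,p^{-\frac{1}{p-1}} \;\geq\; |\pi_{\Phi}|\,p^{-\frac{1}{p-1}} \; .$$
Because $(\cF,\nabla_{\cF})$ has no singularities in $E^{\p}$, theorem \ref{transfer} applies with empty singular set and shows that every solution at the $k$-point $b$ is analytic on the $S$-disk of radius $R_{E^{\p}\subset X}(\Sigma)$ about $b$. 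Since this radius is $\geq |\pi_{\Phi}|\,p^{-\frac{1}{p-1}}$, the full $d$-dimensional solution space converges on $D_{\fX}(b,(p^{-\frac{1}{p-1}}|\pi_{\Phi}|)^{-})$, which is the claim; and as the generic radius is computed at the single Gauss point $\xi_{X}$ of the good model, the bound on $|G(\xi_{X})|$, hence the conclusion, is uniform in $b$.

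The main obstacle is precisely the integral estimate $|G(\xi_{X})| \leq |\pi_{\Phi}|^{-1}$, i.e. controlling the connection matrix in an integral basis of $\fF = \Phi_{\ast}\cO_{\fY}$ at the Gauss point. This rests on three ingredients to be set up cleanly: the \'etale-local identity $\nabla_{d/dS} = H'(T)^{-1}\,d/dT$ for the direct-image connection; the integrality of $d e_{j}/dT$, for which one uses $|n| \leq 1$; and the comparison, valid by good reduction, between the sup-norm of a section of $\cF$ at $\xi_{X}$ and the maximum of the Gauss norms of its coordinates in $\ul{e}$. Proposition \ref{deriv} supplies exactly the factorization $H' = \pi_{\Phi}\cdot(\text{unit})$ that converts this into the stated power of $|\pi_{\Phi}|$; the remaining steps are routine consequences of the good-reduction hypothesis, and the Transfer Theorem is applied in its simplest form, with no singularities to clear.
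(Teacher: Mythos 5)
Your proposal is correct and follows exactly the route the paper intends: the paper's own proof is the single sentence that the corollary ``is an immediate consequence of the trivial estimate (\ref{trivial}) and of the transfer theorem (\ref{transfer})'', and your write-up supplies precisely the implicit details (the bound $|G(\xi_{X})|\leq|\pi_{\Phi}|^{-1}$ at the Gauss point via proposition \ref{deriv} and integrality of the basis, then the two cited results). No discrepancy to report.
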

\begin{proof}
It is an immediate consequence of the trivial estimate (\ref{trivial}) and of the transfer theorem (\ref{transfer}).
\end{proof}
\begin{rmk}\label{Riemann} The constant $-\frac{1}{p-1} + \ord_{p} \pi_{\Phi}$ may be bound uniformly from below  in terms of $d$ and $p$. See \cite[Thm. 2.1]{Lu}.
\end{rmk}
\begin{cor} \label{Newton} For any $a \in Y(k)$, the map $\varphi$ restricts to an open immersion of $D_{\fY}(a,(p^{-\frac{1}{p-1}})^{-})$ in $X$.
\end{cor}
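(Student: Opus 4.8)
The plan is to transport the lower bound on the radius of convergence of $(\cF,\nabla_{\cF})$ from the base $X$ up to the covering $Y$, the conversion factor being the distortion $|\pi_{\Phi}|$ recorded in Proposition \ref{deriv}. Fix $a \in Y(k)$ and set $b = \varphi(a) \in X(k)$. Since $\varphi$ is an \'etale covering we have $B(\varphi) = \emptyset$, so $b$ is a non-branch point and there is a unique local section $\sigma$ of $\varphi$ with $\sigma(b) = a$. First I would use Lemma \ref{sections} to identify $\sigma$ with the element of ${\cS}ol(\cF,\nabla_{\cF})_{b}$ that corresponds, under the inclusion ${\cS}ect(Y/X) \subset {\cS}ol(\cF,\nabla_{\cF})$, to the sheet through $a$. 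Corollary \ref{appltriv} then guarantees that this solution converges on $D_{\fX}(b,p^{-\frac{1}{p-1}}|\pi_{\Phi}|^{-})$; equivalently, as explained in the change-of-viewpoint discussion, $\sigma$ extends to an analytic section $\sigma : D_{\fX}(b,\rho^{-}) \to Y$, where I set $\rho := p^{-\frac{1}{p-1}}|\pi_{\Phi}|$.

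Next I would read off the geometry of $\sigma$. Being a section of $\varphi$, the map $\sigma$ is an isomorphism onto its image, and $\varphi$ restricted to $D_{y_{0}} := \sigma(D_{\fX}(b,\rho^{-}))$ is the inverse isomorphism $D_{y_{0}} \iso D_{\fX}(b,\rho^{-})$; in particular $D_{y_{0}}$ is an open disk centered at $a$ and $\varphi_{|D_{y_{0}}}$ is an open immersion into $X$. It remains to compute the radius of $D_{y_{0}}$. Choosing overconvergent formal coordinates $T$ on the maximal open disk $E$ through $a$ in $Y$ and $S$ on $\varphi(E)$ in $X$, exactly as in the proof of Proposition \ref{deriv}, the map $\varphi$ reads $S = h(T)$ with $|dh/dT| \equiv |\pi_{\Phi}|$ on $E$. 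Since $p^{-\frac{1}{p-1}} < 1$, the disk in question lies inside $E$, so there $\sigma$ has derivative of constant absolute value $|\pi_{\Phi}|^{-1}$. An analytic isomorphism whose derivative has constant absolute value $c$ multiplies the radius of a disk by $c$, so the $T$-radius of $D_{y_{0}}$ equals $|\pi_{\Phi}|^{-1}\rho = p^{-\frac{1}{p-1}}$. Because $T$ is a formal coordinate, this $T$-radius is the formal radius, whence $D_{y_{0}} = D_{\fY}(a,(p^{-\frac{1}{p-1}})^{-})$, and $\varphi$ restricts to an open immersion of this disk into $X$, as claimed.

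The two identifications (via Lemma \ref{sections}, and the passage from convergence of the solution to convergence of the section) are routine and I would only sketch them. The single point that needs care is the radius computation: I must ensure that the linear term of $\sigma$ genuinely dominates on all of $D_{\fX}(b,\rho^{-})$, so that the image is a disk of radius exactly $|\pi_{\Phi}|^{-1}\rho$ and not something smaller. This is precisely what the constancy of $|dh/dT|$ on the ambient maximal disk $E$, established in Proposition \ref{deriv}, provides. I expect this distortion bookkeeping to be the main, though modest, obstacle.
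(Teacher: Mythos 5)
Your proof is correct and follows essentially the same route as the paper: produce the section $\sigma$ over $D_{\fX}(b,(p^{-\frac{1}{p-1}}|\pi_{\Phi}|)^{-})$ from Corollary \ref{appltriv}, then show that $\varphi$ rescales radii by the factor $|\pi_{\Phi}|$ from Proposition \ref{deriv}. The only difference is in the last step, and it is cosmetic: where you invoke the standard fact that an injective analytic map of disks whose derivative has constant absolute value $c$ scales distances (hence radii) exactly by $c$, the paper derives the two-sided estimate $|S(b_{1})-S(b_{2})|=|\pi_{\Phi}|\,|T(a_{1})-T(a_{2})|$ by hand via the $p$-adic Newton lemma and the constancy of $|dh/dT|$, and then propagates it over all of $D_{a}$ using that $\varphi_{a,b}\colon D_{a}\iso D_{b}$ is an isomorphism of disks.
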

\begin{proof} We consider a section $\sigma : D_{b}:= D_{\fX}(b,p^{-\frac{1}{p-1}}|\pi_{\Phi}|^{-}) \to Y$ of $\varphi:Y \to X$, and let $a = \sigma(b)$. Then $\sigma(D_{b}) =: D_{a}$, is an open disk in $Y$, $a \in D_{a}$,  and $\varphi$ restricts to an isomorphism $\varphi_{a,b}: D_{a} \iso D_{b}$. Let $E$ (resp. $E^{\p}$), as in the proof of proposition \ref{deriv} be a residue class of $Y$ (resp. $X$) containing $D_{a}$ (resp. $D_{b}$), and let us use the notation of \lc; in particular, we have $|dh/dT(y)| = |\pi_{\Phi}|$, for any $y \in Y$. The $p$-adic Newton lemma \cite[I.4.2]{DGS} implies that, for any $\veps \in (0,1)$, and any $b_{1} \in E^{\p}(k)$, with $|S(b_{1})-S(b)| < \veps |\pi_{\Phi}|^{2}$,  there is a unique $a_{1} \in E(k)$, with $|T(a_{1})-T(a)| < \veps |\pi_{\Phi}|$, such that $\varphi (a_{1}) =b_{1}$. So, for $a_{1}, a_{2} \in D_{a}(k)$, and $b_{1} = \varphi(a_{1}), b_{2} = \varphi(a_{2}) \in D_{b}(k)$, if $|S(b_{1})-S(b_{2})| < |\pi_{\Phi}|^{2}$ and $|T(a_{1})-T(a_{2})| < |\pi_{\Phi}|$, we have 
$$
|T(a_{1})-T(a_{2})|  \leq  |\pi_{\Phi}|^{-1} |S(b_{1})-S(b_{2})| \; .
$$
On the other hand, since $|dh/dT(y)|$ has the constant value  $|\pi_{\Phi}|$ on $E$, there exists $\veps \in (0,1)$, such that, if  $|T(a_{1})-T(a_{2})| < \veps$, then
$$
 |S(b_{1})-S(b_{2})| \leq |\pi_{\Phi}| |T(a_{1})-T(a_{2})|  \; .
$$
In other words,  there exists $\veps \in (0,1)$, such that, if  $|T(a_{1})-T(a_{2})| < \veps$, then
\beq
 |S(b_{1})-S(b_{2})| =  |\pi_{\Phi}| |T(a_{1})-T(a_{2})|  \; .
\eeq
Now, the map $\varphi_{a,b}: D_{a} \iso D_{b}$ being an isomorphism, this estimate must hold for any $a_{1},a_{2} \in D_{a}(k)$, and for their images $b_{1} =\varphi(a_{1})$ and $b_{2} =\varphi(a_{2}) \in D_{b}(k)$ \cite[6.4.4]{Berkovich}. In particular, 
\beq
D_{a} = D_{\fY}(a,(p^{-\frac{1}{p-1}})^{-}) \; .
\eeq
This proves the proposition.
\end{proof}
\par
We have thus concluded the proof of Theorem \ref{RolleAN}.

\end{parag}

\section{Graphs and radius of convergence}
\label{Graphs}
In this section $Y$ is any compact rig-smooth strictly $k$-analytic curve, $\fY$ is a strictly semistable $\kc$-formal scheme \cite[1.1]{Cont} such that $\fY_{\eta} = Y$, and $\fZ$ is a Cartier divisor in $\fY$, finite \'etale over $\Spf \kc$, with generic fiber the divisor $Z \subset Y(k)$. 
\begin{parag}
We recall from \cite{Cont} that to the pair $(\fY,\fZ)$   we can associate a subgraph $\Gamma_{(\fY,\fZ)}$   of the profinite graph $Y$, equipped with a continuous retraction $\tau_{(\fY,\fZ)} : Y \to \Gamma_{(\fY,\fZ)}$. Notice that we are extending the  graph $\Gamma_{(\fY,\fZ)}$ of \cite{Cont}  to include the points of $Z \subset Y(k)$ as vertices ``at infinite distance'' and the retraction $\tau_{(\fY,\fZ)} : Y \to \Gamma_{(\fY,\fZ)}$ by $\tau_{(\fY,\fZ)}(z) = z$, for any $z \in Z$. We do not exclude the case $\fZ= \emptyset$, and we sometimes write $\tau_{\fY} : Y \to \Gamma_{\fY}$ if this is the case. 
The fibers of the retraction $\tau_{(\fY,\fZ)}$ over points of Berkovich type 2 are the closures in $Y$ of the maximal open disks contained in $Y \setminus \Gamma_{(\fY,\fZ)}$. Any such maximal open disk $E$ contains at least a $k$-rational point $x \in Y(k)$; we define $E =: D_{(\fY,\fZ)}(x,1^{-})$. As a $k$-analytic curve, $D_{(\fY,\fZ)}(x,1^{-})$   is isomorphic  to the standard open $k$-disk in $\Pan$, $D(0,1^{-})$, via a \emph{$(\fY,\fZ)$-normalized coordinate at $x$}.  
Given any object $(\cM,\nabla)$ of ${\bf MIC}((Y \setminus  Z)/k)$, and any $x \in Y(k) \setminus Z$, we can define, as in \cite{Cont}, the \emph{$(\fY,\fZ)$-normalized radius of convergence of $(\cM,\nabla)$ at $x$}, 
$\cR_{(\fY,\fZ)}(x,(\cM,\nabla))$, as the radius, measured in $(\fY,\fZ)$-normalized coordinate at $x$, of the maximal open disk $E$ centered at $x$ and contained in $Y \setminus \Gamma_{(\fY,\fZ)}$, such that 
$(\cE,\nabla)_{|E}$ is a free $\cO_{E}$-module of finite rank, equipped with the trivial  connection. 
\end{parag}\begin{parag}
We can also extend the definition of $\cR_{(\fY,\fZ)}(x,(\cM,\nabla))$ to the case when $x \in Y \setminus Z$ is not necessarily $k$-rational.  In full generality, let $K/k$ be a completely valued field extension,  
 let $Y_{K} = Y \wt_{k} K$ and let $\pi_{K/k}: Y_{K} \to Y$, be the projection.  Then there  is a canonical functor \emph{change of field of constants by $K/k$}

 \beq \label{changeconst}
 \begin{array} {cccc}
\pi_{K/k}^{\ast}:& {\bf MIC}((Y \setminus Z)/k) & \to &{\bf MIC}((Y_{K} \setminus Z)/K) \\ \\
& (\cM,\nabla) & \mapsto &  \pi_{K/k}^{\ast}(\cM,\nabla) \; .
 \\
\end{array}
\eeq
 So, let $x \in Y \setminus Z$, not necessarily $k$-rational. 
As in  \cite{Cont}, we change the field of constants by $\sH(x)/k$, and pick (canonically) a $\sH(x)$-rational point $x^{\p} \in Y \wt_{k}\sH(x)$ above $x$. We then set  
 \beq \label{nonratrad}
  \cR_{(\fY,\fZ)}(x,(\cM,\nabla)) := \cR_{(\fY \wt_{\kc} \sH(x)^{\circ},\fZ \wt_{\kc} \sH(x)^{\circ})} (x^{\p}, \pi_{\sH(x)/k}^{\ast}(\cM,\nabla) ) \; .
\eeq 
 This definition is  compatible with any change  of the field of constants by any $K/k$ in the sense that, for any $K/k$ and any $y \in Y_{K} \setminus Z$,
 \beq
 \cR_{(\fY \wt_{\kc}K^{\circ},\fZ\wt_{\kc} K^{\circ})}(y,\pi_{K/k}^{\ast}(\cM,\nabla)) = \cR_{(\fY,\fZ)}(\pi_{K/k}(y),(\cM,\nabla)) \; .
 \eeq
  The function $x \mapsto  \cR_{(\fY,\fZ)}(x,(\cM,\nabla))$ is conjectured to be continuous on $Y \setminus Z$, for any $(\cM,\nabla) \in {\bf MIC}((Y \setminus Z)/k)$. This conjecture was proven in \cite{Cont} under the assumption that $(\cM,\nabla) \in {\bf MIC}_{(\fY,\fZ)}(X (\ast Z)/k)$, \ie that  $\cM$ extends to a locally free coherent $\cO_{\fY}$-module and $\nabla$ has meromorphic singularities at $Z$.
\end{parag}\begin{parag} 
  We now explain the difference between our  radius of convergence  
 $\cR_{(\fY,\fZ)}(x,(\cM,\nabla))$ and the \emph{intrinsic radius of convergence} $IR(\cM_{(x)},\nabla)$ of 
 \beq \label{diffmod}
 (\cM_{(x)},\nabla) := (\cM,\nabla)_{x} 
 \otimes_{\cO_{Y,x}} \sH(x) \; ,
\eeq
 for $x \in Y$ of Berkovich type 2 or 3, of Kedlaya \cite[Def. 9.4.7]{Ke2}. 
 Here $\cO_{Y,x} = \kappa(x)$   is a valued field \cite[2.1]{BerkovichEtale}, $(\cM,\nabla)_{x} $ is a $\kappa(x)/k$-differential module  and $(\cM_{(x)},\nabla)$ is its completion.\footnote{The reader should appreciate the difference between the operation $(\cM,\nabla) \mapsto (\cM_{(x)},\nabla)$, resulting in a $\sH(x)/k$-differential module, and the change of field of constants by $\sH(x)/k$, $(\cM,\nabla) \mapsto \pi^{\ast}_{\sH(x)/k}(\cM,\nabla)$, resulting in an object of 
${\bf MIC}((Y_{\sH(x)} \setminus Z)/\sH(x))$. } Both definitions go back to 
 Dwork and Robba; the latter was refined 
  by Christol-Dwork and used by Christol-Mebkhout and Andr\'e. We will show that two notions coincide at the points $x \in \Gamma_{(\fY,\fZ)} \setminus Z$. 
\end{parag}\begin{parag} 
 Let us shortly review, in our own words, the definition of $IR(\cM_{(x)},\nabla)$, taken from \cite[Chap. 9]{Ke2}.  
Let  $(F,|\;|_{F})/(k,|\;|)$ be a complete extension field. Then $(F,|\;|_{F})$ is a $k$-Banach algebra, and so is $\cL_{k}(F)$, for the operator norm. 
Similarly, on a finite dimensional $F$-vector space $M$, all norms compatible with $|\;|_{F}$ are equivalent and define equivalent structures of $k$-Banach space on $M$. It will be understood in the following that any such $M$ is given some norm of $F$-vector space, compatible with $|\;|_{F}$, and then $\cL_{k}(M)$ is given the corresponding operator norm.  The definitions will be independent of the choices made. 
\par Under the previous assumptions
$\cL_{k}(F)$ (resp. $\cL_{k}(M)$) will  be regarded as an $F$-vector space via the \emph{left} action, $(a \, L)(b) = a\, L(b)$, for $a,b \in F$ (resp. $a \in F$, $b \in M$) and $L \in \cL_{k}(F)$ (resp. $\cL_{k}(M)$) .
\end{parag} 

 \begin{defn} 
A  \emph{complete differential field of dimension 1 over $(k,|\;|)$} is a complete extension field $(F,|\;|_{F})/(k,|\;|)$ such that the $F$-vector space $Der(F/k) \subset \cL_{k}(F)$ of bounded $k$-linear derivations of $F$, is of dimension 1.  
 A \emph{based complete differential field (of dimension 1) over $(k,|\;|)$} is a triple $(F,|\;|_{F},\partial)$ where $(F,|\;|_{F})/(k,|\;|)$ is a complete extension field and $0 \neq \partial \in Der(F/k)$.
 \end{defn}
  \begin{exa}
 A point $x \in \Pan$ of type 2 (resp. 3) is the point $t_{a,\rho}$ at the boundary of the open disk $D(a,\rho^{-})$, for $a \in k$ and $\rho >0$ in $|k|$ (resp. in $\R \setminus |k|$).  One  defines  \cite[Def. 9.4.1]{Ke2}  $F_{a,\rho} = \sH(x)$, as the completion of $k(T)$ under the absolute value 
 $$f(T) \mapsto |f|_{a,\rho} := |f(t_{a,\rho})| \; .$$
 Let $\cL_{k}(F_{a,\rho})$ be the $k$-Banach algebra of bounded $k$-linear endomorphisms of the $k$-Banach algebra $F_{a,\rho}$, equipped with the operator norm. We still denote the operator norm by $|\;|_{a,\rho}$. 
 Then $\frac{d}{dT}$ extends by continuity to  a $k$-derivation of $F_{a,\rho}$, and
 \beq
 |\frac{d}{dT}|_{a,\rho} = \rho^{-1} \; ,
 \eeq
 as an element of $\cL_{k}(F_{a,\rho})$. For the spectral norm of $\frac{d}{dT} \in \cL_{k}(F_{a,\rho})$, we have 
  \beq
 |\frac{d}{dT}|_{{\rm sp}, a,\rho} = p^{-\frac{1}{p-1}} \, \rho^{-1} \; .
 \eeq
 So, the pair (resp. the triple ) $(F_{a,\rho}, |\;|_{a,\rho} )$ (resp. $(F_{a,\rho}, |\;|_{a,\rho} , \frac{d}{dT})$) is a (resp. based) complete differential field of dimension 1 over $(k,|\;|)$. 
\end{exa}
 
 \begin{rmk}\label{diffop}
Let $(F,|\;|_{F})$ be a  complete differential field of dimension 1 over $(k,|\;|)$. Then, for any $F$-basis $\partial$ of $Der(F/k)$ and for any $n \geq 0$, the $F$-vector subspace $Diff^{n}(F/k) \subset \cL_{k}(F)$ of bounded $k$-linear differential operators of $F$ of order $\leq n$, is freely generated by ${\rm id}_{F}, \partial, \dots , \partial^{n}$. 
 \end{rmk}

 \begin{defn} A \emph{finite dimensional differential module over the complete differential field $(F,|\;|_{F})$} (of dimension 1 over $(k,|\;|)$) is a pair $(M,\nabla)$ consisting of a finite dimensional $F$-vector space $M$ and of a $k$-linear bounded $F$-algebra homomorphism
    $$
    \nabla: Diff(F/k) \to \cL_{k}(M)   \; ,
    $$
 such that  
 $$
   \nabla(\partial) (a\,m) = \partial(a)\, m + a \,   \nabla(\partial) (m) \; ,
   $$
   for any $\partial \in Der(F/k)$, $a \in F$ and $m \in M$.  If we specify a generator $\partial$ of $Der(F/k)$ and the corresponding $\Delta = \nabla(\partial)$, we obtain the
  \emph{based} finite dimensional differential module $(M,\Delta)$ over the based complete differential field $(F,|\;|_{F}, \partial)$. 
  \end{defn}
  
    \begin{rmk} Conversely, given a based finite dimensional differential module $(M,\Delta)$ over the based complete differential field $(F,|\;|_{F}, \partial)$, one defines $(M,\nabla)$ by setting
$$
\nabla(\sum_{i=0}^{n}a_{i} \, \partial^{i})= \sum_{i=0}^{n}a_{i} \,\Delta^{i}\; ,
$$
 for any $n$ and any $a_{0}, \dots,a_{n}  \in F$. It is clear that    $\nabla$ is a bounded $F$-algebra homomorphism
    $$
    \nabla: Diff(F/k) \to \cL_{k}(M)   \; . $$
        \end{rmk}
    \begin{defn}
    Let  $(M,\nabla(\partial)) = (M,\Delta)$ be a nonzero finite dimensional based differential module over 
    the based complete  differential field $(F,|\;|_{F}, \partial)$. 
   The \emph{extrinsic  radius of convergence} of  $(M,\Delta)$ is
    $$
    R(M,\Delta) =  p^{-\frac{1}{p-1}} \, |\Delta|_{{\rm sp}}^{-1} > 0\; ,
    $$
    where $ |\Delta|_{{\rm sp}}$ is the spectral norm of $\Delta$ of the $k$-Banach algebra $ \cL_{k}(M)$.
    \end{defn}
        \begin{defn} Let $(M,\nabla) $ be a  finite dimensional differential module  over the  complete differential field $(F,|\;|_{F})$. The \emph{intrinsic  radius of convergence} of  $(M,\nabla)$ is 
    $$
    IR(M,\nabla) = R(M,\nabla(\partial)) \,p^{\frac{1}{p-1}}\, |\partial|_{{\rm sp}} =  |\partial|_{{\rm sp}}  \, |\Delta|_{{\rm sp}}^{-1} \in (0,1]\; ,
    $$
  for any  non zero element $\partial \in Der(F/k)$. 
\end{defn}
   The following proposition explains why $IR(M,\nabla)$ deserves the attribute \emph{intrinsic}. 
\begin{prop} For any $n =0,1, \dots$, let $c_{n} \in \R_{>0}$ be the operator norm of the map of $k$-Banach spaces  
$$\nabla_{n} = \nabla_{| Diff^{n}(F/k)} : Diff^{n}(F/k) \to \cL_{k}(M) \; .
$$
Then 
\beq
\label{spradius}
    IR(M,\nabla) = \liminf_{n \to \infty} c_{n}^{-1/n} \;.
\eeq
\end{prop}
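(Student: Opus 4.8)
The plan is to fix once and for all a nonzero $\partial \in Der(F/k)$, set $\Delta = \nabla(\partial)$, and exploit the defining equality $IR(M,\nabla) = |\partial|_{{\rm sp}}\,|\Delta|_{{\rm sp}}^{-1}$ together with the spectral limits $|\partial|_{{\rm sp}} = \lim_{n}|\partial^{n}|^{1/n}$ and $|\Delta|_{{\rm sp}} = \lim_{n}|\Delta^{n}|^{1/n}$. Writing $\rho := |\Delta|_{{\rm sp}}/|\partial|_{{\rm sp}} = IR(M,\nabla)^{-1}$, the assertion (\ref{spradius}) is equivalent to $\lim_{n}c_{n}^{1/n} = \rho$. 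Since $IR(M,\nabla) \in (0,1]$ we have $\rho \geq 1$; hence the $i$-th roots of the ratios $|\Delta^{i}|/|\partial^{i}|$ tend to $\rho \geq 1$, so $\max_{0\le i\le n}|\Delta^{i}|/|\partial^{i}|$ is, up to a subexponential factor, governed by its top term and has $n$-th root tending to $\rho$. I will establish the matching lower and upper bounds for $c_{n}^{1/n}$ separately.

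For the lower bound I simply evaluate $\nabla_{n}$ on the single element $\partial^{n} \in Diff^{n}(F/k)$. Because $\nabla$ is an $F$-algebra homomorphism, $\nabla(\partial^{n}) = \Delta^{n}$, so $c_{n} \geq |\Delta^{n}|/|\partial^{n}|$; taking $n$-th roots and passing to the limit gives $\liminf_{n}c_{n}^{1/n} \geq \rho$.

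The delicate direction is the upper bound $\limsup_{n}c_{n}^{1/n} \leq \rho$. By Remark \ref{diffop} every $P \in Diff^{n}(F/k)$ is written uniquely as $P = \sum_{i=0}^{n}a_{i}\partial^{i}$ with $a_{i}\in F$, and since the left action satisfies $|a_{i}\Delta^{i}| = |a_{i}|_{F}\,|\Delta^{i}|$, the ultrametric inequality yields $|\nabla(P)| = |\sum_{i}a_{i}\Delta^{i}| \leq \max_{i}|a_{i}|_{F}\,|\Delta^{i}|$. The crux is to bound $|P|$ from below by the analogous maximum, i.e.\ to prove a \emph{quasi-orthogonality} estimate
\[
\Big|\sum_{i=0}^{n}a_{i}\,\partial^{i}\Big|_{\cL_{k}(F)} \;\geq\; \delta_{n}\,\max_{0\le i\le n}|a_{i}|_{F}\,|\partial^{i}| \;,
\]
with $\delta_{n}>0$ independent of the $a_{i}$ and $\delta_{n}^{1/n}\to 1$. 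This is the main obstacle, and it is precisely where the hypothesis $\dim_{F}Der(F/k)=1$ is used: one must show that the order filtration of $Diff(F/k)$ is almost orthogonal for the operator norm, which one checks by applying $P$ to high-order test elements of $F$ (for instance powers of a coordinate when $F = \sH(x)$ for $x$ of type $2$ or $3$) and isolating the contribution of the leading term, the interference of the lower-order terms being subexponentially small after rescaling. Granting this estimate one obtains $c_{n} \leq \delta_{n}^{-1}\max_{0\le i\le n}|\Delta^{i}|/|\partial^{i}|$, and taking $n$-th roots gives $\limsup_{n}c_{n}^{1/n} \leq \rho$.

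Combining the two bounds shows that $\lim_{n}c_{n}^{1/n}$ exists and equals $\rho$, whence $\liminf_{n}c_{n}^{-1/n} = \rho^{-1} = IR(M,\nabla)$, which is (\ref{spradius}). I expect the quasi-orthogonality of the order filtration to be the only genuine difficulty; everything else is routine bookkeeping with spectral norms, the $F$-algebra homomorphism property of $\nabla$, and the ultrametric inequality.
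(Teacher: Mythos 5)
The paper's own ``proof'' of this proposition is a one-line citation of \cite[Prop.\ 6.3.1]{Ke2}, so a self-contained argument is welcome, and your overall architecture is the right one: setting $\rho=|\Delta|_{{\rm sp}}\,|\partial|_{{\rm sp}}^{-1}=IR(M,\nabla)^{-1}$, the lower bound $\liminf_n c_n^{1/n}\geq\rho$ obtained by testing $\nabla_n$ on $\partial^n$ is correct, the reduction of the upper bound to the estimate $\bigl|\sum_{i}a_i\partial^i\bigr|\geq\delta_n\max_i|a_i|_F\,|\partial^i|$ with $\delta_n^{1/n}\to1$ is correct (using $\rho\geq1$ to control $\max_{i\leq n}|\Delta^i|/|\partial^i|$), and the $\liminf$ in (\ref{spradius}) is then the actual limit. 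The gap is that you explicitly \emph{grant} the quasi-orthogonality estimate rather than prove it, and, as you yourself say, it is the only step carrying any content. Moreover your description of where the difficulty lies is off target: for the fields that actually occur here, $F=\sH(x)=F_{a,\rho}$ with $\partial=\frac{d}{dT}$, there is no ``interference of lower-order terms'' to make subexponentially small, because the monomials $(T-a)^j$ are exactly orthogonal in $F_{a,\rho}$ (one has $|\sum_j c_j(T-a)^j|_{a,\rho}=\max_j|c_j|\rho^j$). Applying $P=\sum_i a_i\partial^i$ to $(T-a)^N$ therefore gives, on the nose, $|P|\geq\max_i|a_i|\,\bigl|\tfrac{N!}{(N-i)!}\bigr|\,\rho^{-i}$; since $|\partial^i|_{a,\rho}=|i!|\,\rho^{-i}$, the genuine issue is the $p$-adic size of the binomial coefficients $\binom{N}{i}$, and the estimate holds with $\delta_n=1$ once one chooses, for each $i$ separately, an exponent $N$ with $|\binom{N}{i}|=1$ (e.g.\ $N=i$). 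Your sketch never confronts this arithmetic point.

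There is a second, related gap of scope: the proposition is stated for an arbitrary complete differential field $(F,|\;|_F)$ of dimension $1$ over $k$, and for such an $F$ the existence of test elements witnessing $|P|\geq\max_i|a_i|_F\,|\partial^i|$ (equivalently, the near-orthogonality of the order filtration of $Diff(F/k)$ for the operator norm) is precisely what must be proved; it does not follow from $\dim_F Der(F/k)=1$ by pure ultrametric bookkeeping. This is exactly what the reference to \cite[Prop.\ 6.3.1]{Ke2} supplies, using the structure of $\sH(x)$ for $x$ of Berkovich type $2$ or $3$ --- the only case the paper needs. So: right strategy, correct easy half, but the decisive inequality is asserted rather than established, and the heuristic offered for it does not identify the actual obstruction.
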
 
\begin{proof} Essentially follows from \cite[Prop. 6.3.1]{Ke2}. 
\end{proof}

\begin{cor} \label{normalization} Let $(\cM,\nabla) \in {\bf MIC}((Y \setminus Z)/k)$, as before. Let $\zeta \in Y$ be a point of Berkovich type 2. Let $D$ be any open disk in $Y$ with boundary point $\zeta$,  let $T \in \cO_{Y,\zeta}$ be the germ of a normalized coordinate on $D$
$$T: D \iso D(0,1^{-}) \; .
$$
For any $r \in (0,1) \cap |k|$, let $C(0;r,1)$ be as in (\ref{annulus}).
For $r$ close to 1, we identify the restriction $(\cM,\nabla)_{|C(0;r,1)}$, via the choice of a basis of sections of $\cM$ in a neighborhood of $\zeta$ containing $C(0;r,1)$, with  a differential system $\Sigma$ of the form \ref{diffsyst}. 
Then  
\beq
IR(\cM_{(\zeta)},\nabla) = R_{D \subset Y}(\Sigma)
\; .
\eeq 
\end{cor}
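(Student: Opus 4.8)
The plan is to reduce the asserted equality to the two explicit spectral formulas we already have: the computation (\ref{radspec}) of $R_{D \subset Y}(\Sigma)$ and the definition $IR(\cM_{(\zeta)},\nabla)=|\partial|_{{\rm sp}}\,|\Delta|_{{\rm sp}}^{-1}$. First I would fix the data. As $\zeta$ is of type $2$ and $T$ sends $D$ isomorphically to $D(0,1^{-})$ with $\zeta$ going to the Gauss point, the complete differential field $\sH(\zeta)$ is identified with $F_{0,1}$ and $\partial:=\frac{d}{dT}$ generates $Der(\sH(\zeta)/k)$; by the example computing $|\frac{d}{dT}|_{{\rm sp},0,1}$ one has $|\partial|=1$ and $|\partial|_{{\rm sp}}=p^{-\frac{1}{p-1}}$. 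The chosen basis of sections of $\cM$ near $\zeta$ trivializes $\cM_{(\zeta)}$ and presents $\Delta=\nabla(\partial)$ as $\partial+N$, where $N$ is multiplication by the germ at $\zeta$ of the connection matrix; since horizontal sections are exactly the solutions of the system $\Sigma$ of (\ref{diffsyst}), $N$ agrees with $G$ up to sign, and in particular $|N(\zeta)|=|G(\zeta)|$. Thus $IR(\cM_{(\zeta)},\nabla)=p^{-\frac{1}{p-1}}\,|\Delta|_{{\rm sp}}^{-1}$, and everything comes down to expressing $|\Delta|_{{\rm sp}}$ through the iterates $G_{[i]}$.

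For this I would invoke the Dwork--Robba formula for the spectral norm of a differential operator at the Gauss point, in the form
\beq
|\Delta|_{{\rm sp}}\;=\;\max\Big(|\partial|_{{\rm sp}},\ \limsup_{n\to\infty}|N_{n}(\zeta)|^{1/n}\Big)\; ,
\eeq
where $N_{n}$ is the matrix of $\Delta^{n}$ in the chosen basis, defined by the recursion $N_{1}=N$, $N_{n+1}=\frac{dN_{n}}{dT}+N\,N_{n}$. The arithmetic heart is the identity $|N_{n}(\zeta)|=|n!|\,|G_{[n]}(\zeta)|$, which I would get by comparing this recursion with the recursion $H_{1}=G$, $H_{n+1}=\frac{dH_{n}}{dT}+H_{n}\,G$ satisfied by $H_{n}:=n!\,G_{[n]}$: the two differ only by the side of the matrix multiplication and by the sign $N=-G$, neither of which changes the (multiplicative) Gauss norm at $\zeta$ since transposition merely permutes entries. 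Using $\lim_{n}|n!|^{1/n}=p^{-\frac{1}{p-1}}=|\partial|_{{\rm sp}}$ this gives $\limsup_{n}|N_{n}(\zeta)|^{1/n}=|\partial|_{{\rm sp}}\,\limsup_{i}|G_{[i]}(\zeta)|^{1/i}$, whence
\beq
|\Delta|_{{\rm sp}}\;=\;|\partial|_{{\rm sp}}\,\max\Big(1,\ \limsup_{i\to\infty}|G_{[i]}(\zeta)|^{1/i}\Big)\; .
\eeq

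The conclusion is then pure bookkeeping:
\beq
IR(\cM_{(\zeta)},\nabla)=|\partial|_{{\rm sp}}\,|\Delta|_{{\rm sp}}^{-1}=\max\Big(1,\limsup_{i}|G_{[i]}(\zeta)|^{1/i}\Big)^{-1}=\min\Big(1,\liminf_{i}|G_{[i]}(\zeta)|^{-1/i}\Big)\; ,
\eeq
which is precisely the right-hand side of (\ref{radspec}), \ie $R_{D \subset Y}(\Sigma)$. As a cross-check, the same outcome follows from the preceding proposition $IR=\liminf_{n}c_{n}^{-1/n}$ together with the identity $c_{n}=\max_{0\le j\le n}|\Delta^{j}|/|j!|$, which is forced by the freeness of $Diff^{n}(\sH(\zeta)/k)$ on ${\rm id},\partial,\dots,\partial^{n}$ and by the fact that the Hasse derivatives $\frac{1}{j!}\partial^{j}$ are isometries at $\zeta$.

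I expect the main obstacle to be the spectral-norm formula of the second paragraph, that is, controlling the full operator norm $|\Delta^{n}|$ (not merely the coefficient matrices $N_{n}$) by the two extreme quantities $|\partial|_{{\rm sp}}$ and $\limsup|N_{n}(\zeta)|^{1/n}$. The delicate inputs are the orthogonality of the Hasse derivatives at the Gauss point, which turns the expansion $\Delta^{n}=\sum_{k}P_{n,k}\partial^{k}$ into a genuine maximum $|\Delta^{n}|=\max_{k}|P_{n,k}(\zeta)|\,|k!|$ rather than a mere inequality, and the verification that this maximum is asymptotically governed by its two endpoints. The basis-independence of the final numbers, and the matching of Kedlaya's normalizing factor $|\partial|_{{\rm sp}}$ at $\rho=1$ with Dwork's generic radius, are then formal.
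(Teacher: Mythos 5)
Your proof is correct and follows the route the paper intends: the paper leaves this corollary without an explicit argument, but its key input is precisely \cite[Prop.~6.3.1]{Ke2} --- the Christol--Dwork/Dwork--Robba spectral-norm formula you invoke --- together with $|\partial|_{{\rm sp},0,1}=p^{-1/(p-1)}$, $|n!|^{1/n}\to p^{-1/(p-1)}$, and the bookkeeping $\max(1,\limsup_i a_i)^{-1}=\liminf_i\max(1,a_i)^{-1}$. The only soft spot is your comparison of the two recursions: transposing $H_{n+1}=H_n'+H_nG$ converts right multiplication by $G$ into left multiplication by $G^{T}$, so you need the convention $N=G^{T}$ (the system $\Sigma$ being the one solved by horizontal sections of the dual), or else invoke the invariance of $IR$ under duality; either way $|N_n(\zeta)|=|n!|\,|G_{[n]}(\zeta)|$ holds and nothing breaks.
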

\begin{rmk} We chose a point $\zeta$ of type 2, rather than allowing points of type 3 as well, only in order to avoid extending the field of definition $k$ and to establish contact with the notation of (\ref{annulus}). 
\end{rmk}
\begin{rmk} \label{comparison}
In formula \ref{spradius}, no formal  semistable model $\fY$ of $Y$ explicitly appears. Such a (smooth) model is hidden, however, in the absolute value corresponding to the point $x$ of type 2 or 3. As explained in corollary \ref{normalization}, the normalization of measures at $x$ of type 2 or 3  in this case varies with $x$ and is obtained by taking as an open disk of radius 1, \emph{any open disk with boundary point $x$}.   \end{rmk}
 \begin{parag}
The disadvantage of the function $x \mapsto IR(\cM_{(x)},\nabla)$ which describes the intrinsic radius of convergence of $\cM$ at $x \in Y$ of type 2 or 3,  is that it cannot possibly be extended by continuity to $Y \setminus Z$. In fact, for any point $x_{0} \in Y(k) \setminus Z$, one obviously has 
\beq
 \lim_{x \to x_{0}} IR(\cM_{(x)},\nabla)= 1 \; ,
\eeq
 where the limit runs over the points $x$ of type 2 or 3. But, $Y(k) \setminus Z$ is dense in $Y \setminus Z$, so one would have $R((\cM,\nabla),x) = 1$ identically on $Y \setminus Z$, which is obviously not always the case. The point in our definition of the function $x \mapsto \cR_{(\fY,\fZ)}(x,(\cM,\nabla))$ \cite{Cont} is that 
 \ben
 \item
 it interpolates the classical notion of radius of convergence, normalized by the choice of $(\fY,\fZ)$;
 \item
 it is compatible with extension of the ground field;
 \item
it coincides on the graph $\Gamma_{(\fY,\fZ)}$ with the  intrinsic radius of convergence
\beq
 IR(\cM_{(x)},\nabla) = \cR_{(\fY,\fZ)}(x,(\cM,\nabla)) \; , \;\; \hbox{if} \;\; x \in  \Gamma_{(\fY,\fZ)} \; .
 \eeq
 \een
 The last property follows from remark \ref{comparison}.
 \end{parag}


\end{document}